\newtheorem{thm}{Theorem}
\newtheorem{lem}[thm]{Lemma}
\newtheorem{prop}[thm]{Proposition}
\newtheorem{cor}[thm]{Corollary}
\newcommand{\lcm}{{\operatorname{lcm}}}
\newcommand{\rad}{{\operatorname{rad}}}
\newcommand{\bj}{{\bf j}}
\newcommand{\bJ}{{\bf J}}
\newcommand{\Z}{{\mathbb Z}} 
\newcommand{\Q}{{\mathbb Q}} 
\newcommand{\ord}[2]{{\ell}_{#1}({#2})}
\title[On a problem of Arnold]{On a problem of Arnold: 
  the average   multiplicative order of a given integer}
\author{P\"ar Kurlberg}
\address{Department of Mathematics, Royal Institute of Technology,
 SE-100 44 Stockholm, Sweden}
 \email{kurlberg@math.kth.se}
\author{Carl Pomerance}
\address{Mathematics Department\\
Dartmouth College\\
Hanover, NH 03755-3551\\
U.S.A.}
\email{carl.pomerance@dartmouth.edu}
\thanks{P.K. was partially supported by grants from the G\"oran
Gustafsson Foundation, the Knut and Alice Wallenberg foundation, the
Royal Swedish Academy of Sciences, and the
Swedish Research Council.  C.P. was supported by NSF grant numbers DMS-0703850,
DMS-1001180. }
\begin{document}
\date{August 25, 2010}

\begin{abstract}
  For $g,n$ coprime integers, let $\ell_{g}(n)$ denote the
  multiplicative order of $g$ modulo $n$.  Motivated by a conjecture
  of Arnold, we study the average of $\ell_{g}(n)$ as $n \leq x$
  ranges over integers coprime to $g$, and $x$ tending to infinity.
  Assuming the Generalized Riemann Hypothesis, we show that this
  average is essentially as large as the average of the Carmichael
  lambda function.
We  also determine the asymptotics of the average of $\ell_{g}(p)$ as
  $p \leq x$ ranges over primes.
\end{abstract}

\maketitle

\section{Introduction}

Given coprime integers $g,n$ with $n>0$ and $|g|>1$, 
let $\ord{g}{n}$ denote the multiplicative
order of $g$ modulo $n$, i.e., the smallest integer $k \geq 1$ such
that $g^{k} \equiv 1 \mod n$.  For $x \geq 1$ an integer let
$$
T_g(x) 
:=
\frac{1}{x}
\sum_{
\substack{
n \leq x\\
(n,g)=1    
}}
\ord{g}{n},
$$
essentially the average multiplicative order of $g$.
In \cite{arnold05-number-theoretical}, Arnold conjectured that if
$|g|>1$, then
$$
T_g(x) \sim c(g) \frac{x}{\log x},
$$
as $x \to \infty$, for some constant $c(g)>0$.
However, in \cite{shparlinski07-dynamical} Shparlinski showed that
if the Generalized Riemann Hypothesis\footnote{What is
  needed is that the Riemann hypothesis holds for Dedekind zeta functions
  $\zeta_{K_n}(s)$ for all $n>1$, where $K_{n}$ is the Kummer
  extension $\Q({\rm e}^{2\pi i/n}, g^{1/n})$.} (GRH) is true, then 
$$
T_g(x)
\gg
\frac{x}{\log x}
\exp \left(
C(g) ( \log \log \log x)^{3/2}
\right),
$$
where $C(g)>0$.
He also suggested that it should be possible to obtain, again assuming
GRH, a lower bound of the form 
$$
T_g(x)
\ge
\frac{x}{\log x} \exp \left(  ( \log \log \log x)^{2+o(1)} \right),
$$
as $x\to\infty$.

Let
\begin{equation}
\label{eq:B}
B={\rm e}^{-\gamma}\prod_p\left(1-\frac1{(p-1)^2(p+1)}\right)
=0.3453720641\dots,
\end{equation}
the product being over primes, and where $\gamma$ is the Euler--Mascheroni
constant.
The principal aim of this paper is to prove the following result.
\begin{thm}
\label{thm:main}
Assuming the GRH, 
$$
T_g(x)
=
\frac{x}{\log x}
\exp \left( 
\frac{B \log \log x}{\log \log \log x}(1+o(1))
\right)
$$
as $x\to\infty$, uniformly in $g$ with $1<|g|\le\log x$.
The upper bound implicit in this result holds unconditionally.
\end{thm}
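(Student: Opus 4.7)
The plan is to reduce Theorem~\ref{thm:main} to the Erd\H{o}s--Pomerance--Schmutz theorem on the average of the Carmichael function, which yields the same main term with the same constant $B$. Since $\ell_g(n) \mid \lambda(n)$ whenever $(n,g)=1$, the unconditional upper bound is immediate:
$$
xT_g(x) = \sum_{\substack{n\le x\\ (n,g)=1}}\ell_g(n) \;\le\; \sum_{n\le x}\lambda(n),
$$
and the Erd\H{o}s--Pomerance--Schmutz asymptotic applied to the right-hand side yields the claimed upper bound, uniformly in $g$ since the right-hand side has no $g$-dependence.

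For the GRH-conditional lower bound the strategy is to locate a set $\mathcal{N}$ of squarefree $n \le x$ with $(n,g)=1$ on which (a) $\sum_{n \in \mathcal{N}}\lambda(n)/x$ already attains the Erd\H{o}s--Pomerance--Schmutz main term, and (b) $\ell_g(n)$ is nearly as large as $\lambda(n)$. For squarefree $n$, $\ell_g(n) = \lcm_{p \mid n}\ell_g(p)$, and with $j_g(p) := [(\Z/p\Z)^*:\langle g\rangle]$ one has $\ell_g(p) = (p-1)/j_g(p)$. Comparing $q$-adic valuations prime by prime yields the key divisibility $\lambda(n)/\ell_g(n) \mid \lcm_{p \mid n}j_g(p)$, so it suffices to keep every $j_g(p)$ small for $p \mid n$. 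Take $\mathcal{N}$ to be built from an Erd\H{o}s--Pomerance--Schmutz-type set $\mathcal{P}$ of primes $p \le y$ (for a suitable parameter $y$, e.g.\ $y = \exp((\log\log x)^2)$) for which $p-1$ is appropriately smooth, further restricted to primes satisfying $j_g(p) \le T := (\log\log x)^{1/2}$. Hooley's method, applied under GRH to the Kummer extensions $\Q(\zeta_m,g^{1/m})$ for varying $m$, shows that discarding primes with $j_g(p) > T$ loses only a negligible proportion of $\mathcal{P}$, so the main term for $\sum_{n \in \mathcal{N}}\lambda(n)$ survives. For any $n \in \mathcal{N}$, one then has $\lcm_{p \mid n}j_g(p) \le \lcm(1,\ldots,T) = e^{T(1+o(1))} = \exp(o(\log\log x / \log\log\log x))$ by the prime number theorem, so $\ell_g(n) \ge \lambda(n)\exp(-o(\log\log x/\log\log\log x))$, which preserves the main term on summing.

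The principal obstacle is the required uniformity in $g$ for $1 < |g| \le \log x$. The effective Chebotarev density theorem applied to $\Q(\zeta_m,g^{1/m})$ carries error terms that grow with the discriminant and hence with $g$; one must verify that these remain dominated by the Hooley main terms across the full ranges of $m$ and $g$ in play. A secondary task is to confirm that imposing the small-index condition on $\mathcal{P}$ in addition to the Erd\H{o}s--Pomerance--Schmutz smoothness constraint does not erode the constant $B$: the relevant Artin-type density restricted to primes with $p-1$ smooth must be shown to be positive and to respect the Erd\H{o}s--Pomerance--Schmutz counting up to lower-order factors.
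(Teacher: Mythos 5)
Your unconditional upper bound is exactly the paper's (namely $\ell_{g}(n)\le\lambda(n)$ together with the Erd\H os--Pomerance--Schmutz asymptotic), and your divisibility observation that for squarefree $n$ one has $\lambda(n)/\ell_{g}(n)\mid\lcm_{p\mid n}i_g(p)$ is correct; it is a clean alternative to the paper's Lemma~\ref{lem:two-prime}, which instead uses $(p-1)/2k\mid\ell_g(p)$ on each prime class together with pairwise coprimality conditions already present in the Erd\H os--Pomerance--Schmutz construction. The broad strategy (rerun that construction over primes whose index is forced to be small, and use GRH via Hooley/effective Chebotarev to show the restriction is harmless) is also the paper's strategy.

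The genuine gap is in your construction of $\mathcal N$ and in the scales at which the GRH input must be applied. Integers built only from primes $p\le\exp((\log\log x)^2)$ cannot attain the main term: writing $y=\log\log x$, the reciprocal sum of \emph{all} primes up to $\exp(y^2)$ is only $2\log y+O(1)$, whereas the exponent $By/\log y$ comes from reciprocal sums of size $\asymp y/\log y$ over primes ranging up to a quantity like $x^{1/y^3}$; worse, any $n\ge\sqrt x$ composed of such small primes has $\omega(n)\ge\log x/(2y^2)$ and hence $\lambda(n)\le 2^{1-\omega(n)}n\le 2x^{1-c/y^2}$, so $\frac1x\sum_{n\in\mathcal N}\lambda(n)$ cannot even reach $x/\log x$. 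The extremal integers must have size $\asymp x$ and, as in the paper, contain one prime factor $p$ of size $x^{1-o(1)}$ (the paper takes $n=up$ with $u$ a product of $O(y/\log y)$ primes below $x^{1/y^3}$ and $\max(x/2u,x^{1/y})<p\le x/u$ with $\ell_g(p)>p/y^2$). Consequently the small-index condition must be imposed on primes at \emph{every} scale up to $x$, and the resulting loss must be shown negligible not merely as a proportion of all primes but relative to the sparse classes $S_k=\{p:(p-1,D)=2k\}$, whose reciprocal sums are only $\gg y/\log^2 y$; this is precisely the content of the paper's Theorem~\ref{thm:uniform-order-on-grh} and Corollary~\ref{cor:recip} (a reciprocal-sum, all-scales, uniform-in-$g$ bound with the dependence on $g$ controlled through $h\tau(h)/\phi(h)$), fed into Lemma~\ref{lem:Ek-equals-Ek-tilde}. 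Your sketch defers exactly these points (uniformity in $g$, survival of the constant $B$ under the index restriction) as tasks to be checked, but they are the substance of the proof; as written, with the stated parameters, the lower-bound construction fails quantitatively, and the uniform GRH estimates needed to repair it are not supplied. (A minor further inaccuracy: the Erd\H os--Pomerance--Schmutz condition is not smoothness of $p-1$ but the requirement $(p-1,D)=2k$, i.e.\ that $(p-1)/2k$ has no prime factor below $m\approx y/\log^3y$.)
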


Let $\lambda(n)$ denote the exponent of the group $(\Z/n\Z)^\times$.
Commonly known as Carmichael's function, we have $\ord{g}{n}\le\lambda(n)$
when $(g,n)=1$, so we immediately obtain that
$$
T_g(x) \leq \frac1x \sum_{n\le x} \lambda(n),
$$
and it is via this inequality that we are able to unconditionally
establish the upper bound implicit in Theorem~\ref{thm:main}.
Indeed, in \cite{carmichael-lambda}, Erd\H os, Pomerance, and Schmutz 
determined the average order  of $\lambda(n)$ showing that, 
as $x\to\infty$,  
\begin{equation}
\label{eq:lambda}
\frac1x \sum_{n\le x} \lambda(n)
=
\frac{x}{\log x}
\exp \left( 
\frac{B \log \log x}{\log \log \log x}(1+o(1))
\right).
\end{equation}

Theorem~\ref{thm:main} thus shows under assumption of the GRH that
 the mean values of $\lambda(n)$ and $\ord{g}{n}$ are of
a similar order of magnitude.  We know, on assuming the GRH, that
$\lambda(n)/\ord{g}{n}$
is very small for almost all $n$
(e.g., see
\cite{artincat,li-pomerance-artin-for-composite}; in the latter paper Li and
Pomerance in fact showed that $\lambda(n)/\ord{g}{n} \leq (\log
n)^{o(\log \log \log n)}$ as $n\to\infty$ on a set of asymptotic density~1), so 
perhaps Theorem~\ref{thm:main} is not very surprising. 
{\em However}, in \cite{carmichael-lambda} it was also shown that the
normal order of $\lambda(n)$ is quite a bit smaller than the average
order: there exists a subset $S$ of the positive integers, of
asymptotic density~$1$, such that for $n \in S$ and $n\to\infty$,
$$
\lambda(n) = 
\frac{n}{(\log n)^{\log \log \log n+A +(\log \log \log n)^{-1+o(1)}}},
$$
where $A>0$ is an explicit constant.
Thus the main contribution to the average of $\lambda(n)$ comes from a
{\em density-zero subset} of the integers, and to obtain our result on
the average multiplicative order, we must show that $\ord{g}{n}$ is
large for many $n$ for which $\lambda(n)$ is large.  

We remark that if one averages over $g$ as well, then a result like
our Theorem~\ref{thm:main} holds unconditionally.  In particular, it
follows from Luca and
Shparlinski~\cite[Theorem~6]{luca-shparlinski03-average-mult-order} that
$$
\frac1{x^2}\sum_{n\le x}\sum_{\substack{1<g<n\\ (g,n)=1}}\ord{g}{n}
=\frac{x}{\log x}\exp\left(\frac{B\log\log x}{\log\log\log x}(1+o(1))\right)
$$
as $x\to\infty$.

We also note that our methods give that Theorem~\ref{thm:main} still
holds for $g=a/b$ a rational number, with uniform error for $|a|,|b|
\leq \log x$, and $n$ ranging over integers coprime to $ab$.

\subsection{Averaging over prime moduli}
\label{subsecintro}

We shall always have the letters $p,q$ denoting prime numbers.
Given a rational number $g \neq 0,\pm 1$ and a prime $p$ not dividing
the numerator or denominator of $g$, let $\ord{g}{p}$ denote
the multiplicative order of $g$ modulo $p$.  For simplicity, when $p$ does divide
the numerator or denominator of $g$, we let $\ord{g}{p}=1$.
Further, given $k \in \Z^+$, let
$$
D_{g}(k) := [\Q(g^{1/k},{\rm e}^{2\pi i /k}):\Q]
$$
denote the degree of the Kummer extension obtained by taking the
splitting field of $X^k-g$.  
Let $\rad(k)$ denote the largest squarefree
divisor of $k$ and let $\omega(k)$ be the number of primes dividing
$\rad(k)$.

\begin{thm}
\label{thm:prime-main}
Given $g \in \Q$, $g \neq 0, \pm 1$, define
$$
c_{g} := 
\sum_{k=1}^{\infty}
\frac{\phi(k) \rad(k) (-1)^{\omega(k)}}{k^{2} D_{g}(k)}.
$$
The series for $c_g$ converges absolutely, and,
assuming the GRH, 
$$
\frac{1}{\pi(x)}
\sum_{p \leq x} \ord{g}{p} =
\frac12c_{g} \cdot x + 
O\left(\frac{x}{(\log x)^{1/2-1/\log\log\log x}}\right).
$$
Further, with $g = a/b$ where $a,b \in \Z$, the error estimate
holds uniformly for $|a|,|b| \leq x$.
\end{thm}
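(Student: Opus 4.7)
The starting point is the identity $\ord{g}{p}=(p-1)/i_g(p)$, where $i_g(p):=[(\Z/p\Z)^\times:\langle g\rangle]$. I would introduce the multiplicative function $h(k):=(-1)^{\omega(k)}\phi(\rad(k))/k$, which by a short check on prime powers satisfies $\sum_{d\mid n}h(d)=1/n$; combined with the elementary identity $\phi(\rad(k))/k=\phi(k)\rad(k)/k^2$, this gives $c_g=\sum_k h(k)/D_g(k)$. Then
\[
\sum_{p\le x}\ord{g}{p}=\sum_{p\le x}(p-1)\sum_{k\mid i_g(p)}h(k)=\sum_{k\le x-1}h(k)\,S_k(x),
\]
where $S_k(x):=\sum_{p\le x,\,k\mid i_g(p)}(p-1)$; the interchange is valid because the double sum is finite. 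The condition $k\mid i_g(p)$ is equivalent to $p$ splitting completely in the Kummer field $K_k:=\Q(\zeta_k,g^{1/k})$ of degree $D_g(k)$.

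For the main contribution I would truncate at a parameter $K$ (to be chosen). For $k\le K$, effective Chebotarev under GRH (Lagarias--Odlyzko), together with the standard bound $\log|d_{K_k}|\ll D_g(k)\log(k|g|)$ on the discriminant, gives
\[
\#\{p\le t:k\mid i_g(p)\}=\frac{1}{D_g(k)}\int_2^t\frac{du}{\log u}+O\bigl(\sqrt t\,\log(k|g|t)\bigr),
\]
and partial summation yields $S_k(x)=\frac{1}{D_g(k)}\int_2^x\frac{t-1}{\log t}\,dt+O\bigl(x^{3/2}\log(k|g|x)\bigr)$. Using $D_g(k)\gg_g k\phi(k)$ and $\phi(\rad(k))/\phi(k)=\rad(k)/k$, one obtains $|h(k)|/D_g(k)\ll\rad(k)/k^3$, so $c_g$ converges absolutely with tail $O(1/K)$. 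Summing over $k\le K$ recovers the main term $(c_g/2)\,x\,\pi(x)$ modulo a truncation error $O(x^2/(K\log x))$ and a Chebotarev error $O(Kx^{3/2}\log(|g|x))$.

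The delicate part is the tail $\sum_{k>K}h(k)\,S_k(x)$: since $|h(k)|$ does not decay, a naive absolute-value bound diverges. I would split further at $K':=\sqrt x/(\log x)^A$ for sufficiently large $A$. In the middle range $K<k\le K'$, GRH-Chebotarev still applies and the main-piece contribution is again $O(x^2/(K\log x))$. For $k>K'$, the condition $k\mid i_g(p)$ forces $\ord{g}{p}\le (p-1)/k<(\log x)^A\sqrt x$; every such $p$ divides $g^m-1$ for some $m\le x/K'$, and since $\omega(g^m-1)\ll m\log(|a||b|)$ the number of such primes is $O\bigl((x/K')^2\log x\bigr)$. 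Combined with a Rankin-style treatment of the partial divisor sum $\sum_{k\mid i_g(p),\,k\le K'}h(k)$, the total contribution is lower-order.

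Balancing the errors forces $K=(\log x)^{1/2-1/\log\log\log x}$, where the small shift $1/\log\log\log x$ absorbs $(\log\log x)^{O(1)}$ losses from the tail bookkeeping; dividing by $\pi(x)\sim x/\log x$ yields the stated asymptotic, uniformly for $|a|,|b|\le x$ because Chebotarev depends on $g$ only through $\log|g|$. The principal obstacle is the large-$k$ tail: $h$ does not decay, so one cannot use absolute bounds, and the resolution demands a careful combination of GRH-Chebotarev on the middle range with the sparsity of primes of small multiplicative order from the $g^m-1$ divisibility argument, in the style of Hooley's treatment of Artin's conjecture.
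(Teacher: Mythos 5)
The main-term half of your argument is essentially the paper's: your coefficient $h(k)=(-1)^{\omega(k)}\phi(\rad(k))/k$ is exactly the collapsed form of the identity $1/i_g(p)=\sum_{uv\mid i_g(p)}\mu(v)/u$ used there, and GRH--Chebotarev for the Kummer fields plus partial summation does produce the term $\tfrac12 c_g\,x\pi(x)$ with a truncation error of the shape $x\pi(x)\psi(h)/K$, where $\psi(h)=h\tau(h)/\phi(h)$. One caveat already here: uniformity for $|a|,|b|\le x$ is not automatic ``because Chebotarev depends only on $\log|g|$'' --- $D_g(k)$ carries the factor $(k,h)$, so your bound $D_g(k)\gg_g k\phi(k)$ hides a $g$-dependence that the paper has to track explicitly through $\psi(h)$ (see the estimate \eqref{eq:sum-estimate}) and only removes at the end using $h\ll\log x$.

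The genuine gap is your treatment of the tail. With $K'=\sqrt x/(\log x)^A$, the condition $k\mid i_g(p)$, $k>K'$, only forces $\ord{g}{p}<\sqrt x(\log x)^A$, so the $a^m-b^m$ divisibility argument runs over $m\le \sqrt x(\log x)^A$ and yields, as you yourself write, a count $O\bigl((x/K')^2\log x\bigr)=O\bigl(x(\log x)^{2A+1}\bigr)$ --- larger than $\pi(x)$, hence vacuous. To make that argument nontrivial you need the order bound below roughly $\sqrt x/\log^2x$, i.e.\ $K'\gg\sqrt x\log^2x$; but then the accumulated Chebotarev errors $O(K'x^{3/2}\log x)$ in your middle range are $\gg x^2\log^3x$ and swamp the main term. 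So the two tools you propose never meet: divisors $k$ (equivalently indices $i_g(p)$) within a few powers of $\log x$ of $\sqrt x$ are covered by neither, and the appeal to ``a Rankin-style treatment of the partial divisor sum'' is not an argument at this, the decisive, point. Note also that for $k>K'$ the relevant inner sum is $\sum_{k\mid i_g(p),\,k>K'}h(k)$, which in absolute value can be as large as $2^{\omega(i_g(p))}$, so even a count of $O(x/\log^3x)$ bad primes would not obviously suffice under absolute values. The paper avoids this barrier structurally: the divisor parameter is never allowed to exceed $z=(\log x/\log\log x)^{1/2}$; the primes with $i_g(p)>z$ are bounded by the trivial inequality $\ord{g}{p}\le x/z$ times the GRH count of such primes (Theorem~\ref{thm:uniform-order-on-grh}), plus a correction term $E_1$ controlled by explicit bounds on the truncated coefficient sums; and inside Theorem~\ref{thm:uniform-order-on-grh} the troublesome indices of size about $\sqrt x$ are handled by Brun--Titchmarsh applied to \emph{prime} divisors $q\mid i_g(p)$, where the reciprocal sum over the short range $[\sqrt x/\log^2x,\,\sqrt x\log^2x]$ is only $O(\log\log x/\log x)$. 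Your proposal has no analogue of this step, and without it (or an equivalent bound for indices near $\sqrt x$) the argument does not close.
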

\noindent
This result might be compared with Pappalardi~\cite{Pa}.

Though perhaps not obvious from the definition, $c_{g}>0$ for all $g
\neq 0,\pm 1$.  In order to determine $c_{g}$, define
$$
c :=\prod_p\left(1-\frac{p}{p^3-1}\right)=0.5759599689\dots,
$$
the product being over primes; $c_{g}$ turns out to be a
positive {\em rational} multiple of $c$.  Theorem~\ref{thm:prime-main}
should be contrasted with the unconditional result of 
Luca~\cite{luca05-mean-multiplicative-orders} that
$$
\frac1{\pi(x)}\sum_{p\le x}\frac1{(p-1)^2}\sum_{g=1}^{p-1}\ord{g}{p}
= c+O(1/(\log x)^\kappa)
$$
for any fixed $\kappa>0$.
By partial summation one can then obtain
$$
\frac1{\pi(x)}\sum_{p\le x}\frac1{p-1}\sum_{g=1}^{p-1}\ord{g}{p}
\sim\frac12c\cdot x\hbox{ as }x\to\infty,
$$
a result that is more comparable to Theorem~\ref{thm:prime-main}.

To sum the series that defines $c_{g}$ we will need some further notation.
Write $g = \pm g_{0}^{h}$ where $h$ is a positive integer and
$g_{0}>0$ is not an exact power of a
rational number, and write $g_{0} = g_{1} g_{2}^{2}$ where $g_{1}$ is
a squarefree integer and $g_{2}$ is a rational.  Define 
$\Delta(g)=g_{1}$ if $g_{1} \equiv 1 \mod 4$, and 
$\Delta(g)= 4 g_{1}$ if $g_{1} \equiv 2$ or $3 \mod 4$.  
Let $e=v_2(h)$ (that is, $2^e\| h$).
For $g>0$, define $n = \lcm[2^{e+1},\Delta(g)]$.
For $g<0$, define $n = 2 g_{1}$ if $e=0$ and $g_{1} \equiv 3 \mod 4$,
or $e=1$ and $g_{1} \equiv 2 \mod 4$; let $n = \lcm[2^{e+2},\Delta(g)]$
otherwise.  

Consider the multiplicative function
$f(k) = (-1)^{\omega(k)} \rad(k)(h,k)/k^{3}$.  
We note that for $p$ prime and $j \geq 1$, 
$$
f(p^{j}) = -p^{1-3j+\min(j,v_p(h)))}.
$$
Given an integer $t\geq 1$, define $F(p,t)$ and $F(p)$ by
$$
F(p,t) := \sum_{j=0}^{t-1} f(p^{j}), \quad
F(p) := \sum_{j=0}^\infty f(p^{j})
$$
In particular, we note that if  $p \nmid h$, then
\begin{equation}
  \label{eq:Fp-if-no-h}
F(p)  =
1 - \sum_{j=1}^{\infty} p^{1-3j}  =   1- \frac{p}{p^{3}-1} .
\end{equation}

\begin{prop} 
\label{prop:finding-cg}
With notation as above, if $g<0$ and $e>0$, we have
$$
c_{g} = 
c \cdot
\prod_{p |h} \frac{F(p)}{1-\frac{p}{p^{3}-1}}
\cdot
\left( 1 -
\frac{F(2,e+1)-1}{2F(2)}
+
\prod_{p |n}  \left(1 - \frac{F(p,v_p(n))}{F(p)} \right)
\right),
$$
otherwise
$$
c_{g} = 
c \cdot
\prod_{p |h} \frac{F(p)}{1-\frac{p}{p^{3}-1}}
\cdot
\left( 1 + 
\prod_{p |n}  \left(1 - \frac{F(p,v_p(n))}{F(p)} \right)
\right).
$$
\end{prop}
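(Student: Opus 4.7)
The plan is to evaluate the series for $c_g$ as an Euler product by inserting an explicit formula for the Kummer degree $D_g(k)$. Classical Kummer theory (going back to Hooley's treatment of Artin's conjecture and worked out explicitly by Wagstaff and others) gives
$$D_g(k) = \frac{k\,\phi(k)}{(h,k)\,\epsilon_g(k)},$$
where $\epsilon_g(k) \in \{1,2\}$ records whether $\Q(\sqrt{\Delta(g)})$, the quadratic subfield of $\Q(\zeta_{|\Delta(g)|})$ cut out by $g$, already lies inside $\Q(g^{1/k})$. The integer $n$ in the proposition is defined precisely so that, in the generic situation, the condition $\epsilon_g(k)=2$ is equivalent to $n \mid k$; the split between the two displayed formulas reflects a $2$-adic subtlety in defining this set when $g<0$ and $e>0$.

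Substituting the above and using $\rad(k)(h,k)(-1)^{\omega(k)}/k^3 = f(k)$ converts the series into
$$c_g = \sum_{k=1}^\infty f(k)\,\epsilon_g(k).$$
The bound $|f(p^j)| \le p^{1-2j}$ (immediate from the displayed formula for $f(p^j)$) makes the sum absolutely convergent, so rearrangements and Euler product factorizations are justified throughout. Writing $\epsilon_g(k) = 1 + (\epsilon_g(k)-1)$ splits the sum into two pieces. The first piece is $\sum_k f(k) = \prod_p F(p)$ by multiplicativity, and separating the factors at primes dividing $h$ from those not dividing $h$, and applying \eqref{eq:Fp-if-no-h} to the latter, yields
$$\prod_p F(p) = c \cdot \prod_{p \mid h}\frac{F(p)}{1-p/(p^3-1)},$$
which is the universal prefactor appearing in both formulas.

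For the second piece, in the generic case ($g>0$, or $g<0$ with $e=0$ outside the special sub-conditions), $\{k : \epsilon_g(k)=2\} = \{k : n \mid k\}$. Grouping by $p$-adic valuations,
$$\sum_{n\mid k} f(k) = \prod_{p\mid n}\bigl(F(p) - F(p, v_p(n))\bigr) \cdot \prod_{p\nmid n} F(p) = \Bigl(\prod_p F(p)\Bigr)\prod_{p\mid n}\!\left(1-\frac{F(p,v_p(n))}{F(p)}\right),$$
which after dividing out the prefactor produces the $1 + \prod_{p\mid n}(\cdots)$ shape of the second formula. In the exceptional case $g<0$, $e>0$, the set $\{k : \epsilon_g(k)=2\}$ differs from $\{k : n\mid k\}$ by an extra $2$-adic condition (coming from the fact that a $k$-th root of a negative number introduces a $4$th root of unity when $k$ is even). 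An inclusion–exclusion at the prime $2$ writes its characteristic function as $\mathbf{1}_{n\mid k}$ plus a purely $2$-adic piece whose $f$-weighted sum evaluates to $-\bigl(F(2,e+1)-1\bigr)/\bigl(2F(2)\bigr)\cdot\prod_p F(p)$, reproducing the first formula.

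The main obstacle is the bookkeeping for $\epsilon_g(k)$ across the cases parameterized by $\operatorname{sgn}(g)$, by $e = v_2(h)$, and by $g_1 \bmod 4$: the definition of $n$ already absorbs most of this, but the $g<0$, $e>0$ scenario requires the separate $2$-adic analysis sketched above. Once the characteristic function of $\{k : \epsilon_g(k)=2\}$ has been pinned down in each case, the rest is a direct expansion of absolutely convergent Euler products.
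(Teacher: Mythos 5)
Your proposal is correct and takes essentially the same route as the paper's proof: substitute Wagstaff's formula to rewrite $c_g=\sum_{k\ge 1} f(k)\,\epsilon_g(k)$, evaluate $\sum_k f(k)=\prod_p F(p)=c\prod_{p\mid h}F(p)/(1-p/(p^3-1))$ and $\sum_{n\mid k}f(k)=\prod_p F(p)\cdot\prod_{p\mid n}\bigl(1-F(p,v_p(n))/F(p)\bigr)$ by multiplicativity of $f$, and in the case $g<0$, $e>0$ subtract the extra $2$-adic contribution $\tfrac12\bigl(F(2,e+1)-1\bigr)\prod_{p>2}F(p)$. The only quibble is your opening claim that $\epsilon_g(k)\in\{1,2\}$: for $g<0$, $e>0$ one has $\epsilon_g(k)=1/2$ when $2\mid k$ and $2^{e+1}\nmid k$, and this set (disjoint from $\{k: n\mid k\}$) is precisely the source of the correction term you compute, so the substance of your argument matches the paper's.
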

For example, if $g=2$, then $h=1$, $e=0$, and $n = 8$.  Thus
$$
c_{2} = c \cdot \left(1 + 1 - \frac{F(2,3)}{F(2)} \right)
= c \cdot
\left( 2 - \frac{1-2/(2^{1})^{3}-2/(2^2)^{3}}{1-2/(8-1)} \right) 
= c \cdot \frac{159}{160}.
$$

\section{Some preliminary results}


For an integer $m\ge2$, we let $P(m)$ denote the largest prime
dividing $m$, and we let $P(1)=1$.

Given a rational number $g\ne0,\pm1$, we recall the notation $h,e,n$
described in Section~\ref{subsecintro}, and for a positive integer $k$,
we recall that $D_g(k)$
is the degree of the splitting field of $X^k-g$ over $\Q$.
We record a result of Wagstaff on $D_g(k)$,
see~\cite{wag}, Proposition~4.1 and the second
paragraph in the proof of Theorem~2.2.
\begin{prop}
\label{prop:Wagstaff}
With notation as above,
\begin{equation}
  \label{eq:kummer-degree}
D_{g}(k) =  \frac{\phi(k) \cdot k }{(k,h) \cdot \epsilon_g(k)}   
\end{equation}
where $\phi$ is Euler's function and $\epsilon_g(k)$ is defined as follows:
If $g>0$, then
$$
\epsilon_g(k) 
:=
\begin{cases}
2 & \text{if $n | k$},\\
1 & \text{if $n \nmid k$}.
\end{cases}
$$
If $g<0$, then
$$
\epsilon_g(k) 
:=
\begin{cases}
2 & \text{if $n | k$},\\
1/2 & \text{if $2 | k$ and $2^{e+1} \nmid k$},\\
1 & \text{otherwise}.
\end{cases}
$$
\end{prop}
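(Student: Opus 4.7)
The plan is to compute $[K(g^{1/k}):K]$ where $K=\Q(\zeta_k)$ with $\zeta_k={\rm e}^{2\pi i/k}$, and then multiply by $[K:\Q]=\phi(k)$. Since $K$ contains all $k$-th roots of unity, Kummer theory applies and gives
$$
[K(g^{1/k}):K] \;=\; \frac{k}{d},
$$
where $d$ is the largest divisor of $k$ such that $g \in (K^\times)^d$. So everything reduces to determining $d$ as a function of $g$ and $k$, and I want to show that $d = (k,h)\cdot \epsilon_g(k)$.

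First I would dispose of the ``obvious'' factor $(k,h)$. Writing $g = \pm g_0^h$ with $g_0>0$ not a perfect power of a rational, I note that $g_0$ is itself a $(k,h)$-th power of no rational other than those forced by the factorization of $h$, so over $\Q$ already we have $g \in (\Q^\times)^{(k,h)}$, and no larger power divisor coming from the $h$-th-power structure can help. The remaining question is what \emph{extra} divisibility can be gained by passing from $\Q^\times$ to $K^\times$, i.e.\ when $g_0$ (or $\pm g_0$) becomes a further square, cube, etc., in $K$.

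Next I would use the key arithmetic fact: a nonsquare rational $r$ becomes a square in $K=\Q(\zeta_k)$ if and only if $\Q(\sqrt r)\subseteq K$, and by the conductor--discriminant theorem this happens exactly when the discriminant of $\Q(\sqrt r)$ divides $k$. Higher-power extractions never happen for free: if $p$ is an odd prime, $g_0$ cannot become a $p$-th power in an abelian extension of $\Q$ (since $\Q(g_0^{1/p})/\Q$ is non-abelian when $g_0$ is not already a $p$-th power), and similarly no $4$-th root can appear unless $g_0$ is already a square times something controlled. So the only way to gain is a single factor of $2$, coming from a square root. Writing $g_0 = g_1 g_2^2$, the relevant square root is $\sqrt{g_1}$ (for $g>0$) or $\sqrt{-g_1}$ (for $g<0$), and its field is $\Q(\sqrt{\pm g_1})$, whose discriminant is exactly $\Delta(g)$ in the $g>0$ case and $\Delta(-g_1)$ in the $g<0$ case; this is where the definition of $n$ enters.

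The last, and most delicate, step is to match the $2$-adic bookkeeping. The gained factor of $2$ in $d$ is available only if, after accounting for the $(k,h)$-th power already present, there is still ``room'' for one more square — equivalently, if $v_2(k) > v_2(h)=e$, so that $2^{e+1}\mid k$; combining this with the conductor condition $\Delta(g)\mid k$ gives $n\mid k$ in the $g>0$ case. For $g<0$ the sign $-1$ interacts with $\sqrt{-1}\in\Q(\zeta_k)$ (which needs $4\mid k$), producing the three-case formula: the generic case $n\mid k$ gives $\epsilon_g(k)=2$; a mismatch where a square root of $g$ is available in $K$ but the parity of $h$ forces a \emph{loss} of a factor of $2$ yields $\epsilon_g(k)=1/2$ (the case $2\mid k$, $2^{e+1}\nmid k$); otherwise $\epsilon_g(k)=1$. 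The main obstacle I expect is precisely this 2-adic case analysis for $g<0$: tracking how $e=v_2(h)$, the congruence class of $g_1\bmod 4$, and $v_2(k)$ interact to produce the middle case $\epsilon_g(k)=1/2$ rather than $1$ or $2$, and confirming that no other denominator ever arises. Once the three cases are verified, $d=(k,h)\epsilon_g(k)$ and the formula for $D_g(k)$ follows.
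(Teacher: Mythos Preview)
The paper does not actually prove this proposition: it is stated as a known result and attributed to Wagstaff~\cite{wag} (specifically Proposition~4.1 and part of the proof of Theorem~2.2 there), with no argument given. So there is nothing in the paper to compare your proposal against.

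That said, your outline is the standard Kummer-theoretic route and is essentially how Wagstaff's computation goes. Your identity $[K(g^{1/k}):K]=k/d$ with $d$ the largest divisor of $k$ for which $g\in(K^\times)^d$ is correct when $\mu_k\subset K$ (one checks that if $a^m\in(K^\times)^k$ with $m\mid k$, then $a/c^{k/m}\in\mu_m\subset(K^\times)^{k/m}$, so the order of $a$ mod $(K^\times)^k$ really is $k/d$). Your isolation of the $(k,h)$ factor, the observation that odd primes contribute no extra powers because $\Q(g_0^{1/p})/\Q$ is non-abelian, and the use of the conductor criterion $\Delta(g)\mid k$ for $\sqrt{g_1}\in\Q(\zeta_k)$ are all on target. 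One small correction: you should say the gained square must fit inside $k$, i.e.\ $2(k,h)\mid k$, which is the condition $v_2(k)\ge e+1$, and then one checks that $g_0^{h/(k,h)}$ is an odd power of $g_0$, hence congruent to $g_1$ modulo squares --- this is what links the $2^{e+1}$ and the $\Delta(g)$ conditions into $n\mid k$.

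For $g<0$ your diagnosis of the $\epsilon_g(k)=1/2$ case is right in spirit: when $2\mid k$ but $2^{e+1}\nmid k$, the sign $-1$ cannot be absorbed as a $(k,h)$-th power since that would require $\zeta_{2^{v_2(k)+1}}\in\Q(\zeta_k)$, so $d$ drops to $(k,h)/2$. You should, if you write this up, also verify the exceptional definitions of $n$ for $g<0$ (the cases $e=0$, $g_1\equiv3\pmod4$ and $e=1$, $g_1\equiv2\pmod4$); these arise because $-g_1$ rather than $g_1$ is the relevant radicand, changing the discriminant. This is exactly the ``$2$-adic bookkeeping'' you flag, and it is genuinely the only delicate part.
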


We also record a GRH-conditional version of the Chebotarev density
theorem for Kummerian fields over $\Q$, see Hooley~\cite[Sec.~5]{hooley-artin}
and Lagarias and Odlyzko~\cite[Theorem~1]{LG}.  Let $i_g(p)=(p-1)/\ord{g}{p}$,
the index of $\langle g\rangle$ in $(\Z/p\Z)^*$ when $g\in(\Z/p\Z)^*$.
\begin{thm}
\label{thm:pnt}
Assume the GRH.
Suppose $g=a/b\ne0,\pm1$ where $a,b$ are integers of absolute value at most $x$.
For each integer $k\le x$, we have that the number of primes $p\le x$
for which $k\mid i_g(p)$ is 
$$
\frac1{D_g(k)}\pi(x)+O(x^{1/2}\log x).
$$
\end{thm}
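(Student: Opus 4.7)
The plan is to recast the condition $k\mid i_g(p)$ as a complete-splitting condition in the Kummer field $K_k := \Q(\zeta_k, g^{1/k})$ and then invoke the GRH-conditional effective Chebotarev density theorem of Lagarias--Odlyzko. First I would verify the translation: for a prime $p$ coprime to $k\,ab$, the condition $k\mid(p-1)/\ord{g}{p}$ forces both $k\mid p-1$ and $g^{(p-1)/k}\equiv 1\pmod p$; conversely, these two conditions force $\ord{g}{p}\mid (p-1)/k$, i.e.\ $k\mid i_g(p)$. The first means $p$ splits completely in $\Q(\zeta_k)$, and adjoining the second means $X^k-g$ splits mod $p$, which (for $p$ unramified in $K_k$) is exactly complete splitting in $K_k$. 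By Proposition~\ref{prop:Wagstaff} we have $[K_k:\Q]=D_g(k)$.

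Second, I would apply Lagarias--Odlyzko to the trivial class in $\operatorname{Gal}(K_k/\Q)$: under GRH, the number of primes $p\le x$ splitting completely in $K_k$ equals
$$
\frac{\pi(x)}{D_g(k)} + O\!\left(x^{1/2}\left(\frac{\log |d_{K_k}|}{D_g(k)} + \log x\right)\right),
$$
where $d_{K_k}$ is the absolute discriminant. The primes ramifying in $K_k$ are among those dividing $k\,ab$, so they number $O(\log x)$ and are absorbed into the stated error.

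Third, I would bound the discriminant. Only primes $\ell\mid k\,ab$ ramify in $K_k$, and a standard local estimate for a Galois extension of degree $n=D_g(k)$ tamely or wildly ramified at $\ell$ gives $v_\ell(d_{K_k}) \ll n(1+v_\ell(k\,ab))$; summing yields $\log |d_{K_k}| \ll D_g(k)\log(k|ab|)$. Under the hypothesis $k,|a|,|b|\le x$ this is $\ll D_g(k)\log x$, so the error in the display above simplifies to $O(x^{1/2}\log x)$, which is exactly the stated bound.

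The only real obstacle is the discriminant estimate: a naive tame bound $v_\ell(d_{K_k})\le D_g(k)-1$ fails at primes dividing $k$ where wild ramification can inflate the exponent. However, only the crude order of magnitude $\log |d_{K_k}|\ll D_g(k)\log x$ is required, and this follows from the elementary observation that $K_k$ is contained in a compositum of at most two abelian-type extensions ramified only at primes of $k\,ab$, for which such bounds are standard. No delicate local computation is needed.
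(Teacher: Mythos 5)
Your proposal is correct and is essentially the argument the paper has in mind: the paper does not prove Theorem~\ref{thm:pnt} itself but records it as a consequence of the GRH-conditional effective Chebotarev density theorem (Lagarias--Odlyzko, Theorem~1) applied to the Kummer field $\Q(\zeta_k,g^{1/k})$, exactly as in Hooley's treatment, using the equivalence of $k\mid i_g(p)$ with complete splitting and a discriminant bound of the shape $\log|d_{K_k}|\ll D_g(k)\log x$ to get uniformity in $k,a,b$. Your outline reproduces this standard derivation faithfully, so there is nothing to add beyond noting that the unramified-primes/discriminant bookkeeping you sketch is the routine part of that citation.
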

\noindent
Note that $k\mid i_g(p)$ if and only if $x^k-g$ splits completely modulo~$p$. 

We will need the following {\em uniform} version of
\cite[Theorem~23]{power-generator}.
\begin{thm}
\label{thm:uniform-order-on-grh}
If the GRH is true, then for $x,L$ with $1\le L\le\log x$
and $g=a/b\ne0,\pm1$ where $a,b$ are integers with $|a|,|b|\le x$,
we have 
\[
\left|\left\{ p \leq x : \ord{g}{p} \leq \frac{p-1}{L}\right \}\right|
~\ll~  \displaystyle{\frac{\pi(x)}{L}\cdot\frac{h \tau(h)}{\phi(h)}}
+ 
\frac{x \log \log x }{\log^2 x},
\]
where $\tau(h)$ is the number of divisors of $h$.
\end{thm}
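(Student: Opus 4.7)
The plan is to reformulate the condition $\ord{g}{p} \leq (p-1)/L$ as $i_g(p) \geq L$ and count such primes via a small ``witness divisor''. For each $p$ with $i_g(p) \geq L$, let $k$ be the smallest divisor of $i_g(p)$ that is $\geq L$; writing $q = P(k)$ and $m = k/q$, minimality forces $m < L$, and $k = qm \geq L$ gives $q \geq L/m$. Hence
$$\#\{p \leq x : \ord{g}{p} \leq (p-1)/L\} \;\leq\; \sum_{m < L}\;\sum_{\substack{q \text{ prime}\\ q \geq L/m}} \#\{p \leq x : qm \mid i_g(p)\}.$$

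I would split the sum according to the size of $q$, choosing a parameter $y$ of the shape $x^{1/2}/\log^A x$. For $q \leq y$, Theorem~\ref{thm:pnt} yields the approximation $\pi(x)/D_g(qm) + O(x^{1/2}\log x)$; the cumulative Chebotarev error over the $O(L\pi(y))$ witnesses is absorbed into the $x\log\log x/\log^2 x$ bound for $y$ suitably chosen. For $q > y$ the Chebotarev error dominates, so a Hooley-style treatment is needed: the condition $q \mid i_g(p)$ forces both $p \equiv 1 \pmod{q}$ and $p \mid g^{(p-1)/q} - 1$, so the count is bounded by a sum of $\omega(g^s - 1) \ll s \log|g|$ for small $s = (p-1)/q$, together with Brun--Titchmarsh for the intermediate regime of $q$.

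For the main contribution, invoking Wagstaff's bound $D_g(k) \geq \phi(k)\,k/(2(k,h))$ from Proposition~\ref{prop:Wagstaff} reduces matters to estimating
$$\pi(x) \sum_{m < L} \sum_{q \geq L/m} \frac{(qm, h)}{\phi(qm)\, qm}.$$
Splitting by whether $q \mid h$, the generic contribution $(q,h)=1$ gives $\ll (m,h)/(L\phi(m)m)$ after summing $\sum_{q \geq L/m} 1/(\phi(q)q) \ll m/(L \log(L/m))$, while the finitely many $q \mid h$ produce pieces of similar shape. Summing $\sum_{m < L}(m,h)/(m\phi(m))$ by collecting terms with $d = (m,h)$ yields an upper bound $\ll (h/\phi(h))\tau(h)$ via standard divisor-sum manipulations, producing the desired main term $(\pi(x)/L)(h\tau(h)/\phi(h))$.

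The main obstacle will be orchestrating the parameter split so that the cumulative error is $O(x\log\log x/\log^2 x)$ while keeping the dependence on $g$ uniform. The factor $h\tau(h)/\phi(h)$ must be tracked through every application of Wagstaff's formula, and one must verify that the subsidiary large-$q$ ranges introduce no spurious loss; in particular, the elementary bound $\omega(g^s - 1) \ll s\log|g|$ forces the choice of $y$ to accommodate the uniformity $|a|,|b| \leq x$, and this interaction with the Chebotarev cutoff is the delicate piece.
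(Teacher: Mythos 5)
Your overall three-range structure (GRH--Chebotarev for small moduli, Brun--Titchmarsh for $q$ near $x^{1/2}$, counting prime factors of $a^{s}-b^{s}$ for the largest $q$) is the same as the paper's, but your handling of the main range contains a genuine flaw. You bound the count by the pair-indexed sum $\sum_{m<L}\sum_{q\ge L/m}\#\{p\le x: qm\mid i_g(p)\}$, whose Chebotarev main term is (up to a factor $2$) $\pi(x)\sum_{m<L}\sum_{q\ge L/m}(qm,h)/\bigl(qm\,\phi(qm)\bigr)$. Your evaluation of this double sum is incorrect: for fixed $m$ and $(q,hm)=1$ the $q$-sum gives $\frac{(m,h)}{m\phi(m)}\sum_{q\ge L/m}\frac{1}{q\phi(q)}\ll\frac{(m,h)}{L\,\phi(m)\,\log(L/m+2)}$, not $\frac{(m,h)}{L\,m\,\phi(m)}$ as you claim, and summing over $m<L$ then costs an extra factor of order $\log\log L$ (with your cruder bound $\sum_{q\ge L/m}1/(q\phi(q))\ll m/L$ it costs $\log L$). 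The loss is intrinsic to the pair decomposition, not a bookkeeping artifact: each modulus $k=qm$ is counted once for every prime $q\mid k$ with $k/q<L$, i.e.\ with multiplicity roughly $\omega(k)$, and already for $h=1$ the double sum has true order $(\log\log L)/L$ rather than $1/L$ (restrict to $m\asymp L/2^{j}$, $q\asymp 2^{j}$, which contributes $\asymp 1/(Lj)$, and sum over $j$). So as written your argument yields only $\ll \frac{\pi(x)}{L}\frac{h\tau(h)}{\phi(h)}\log\log L+\frac{x\log\log x}{\log^{2}x}$, which is weaker than the stated theorem.

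The repair is to sum over distinct witness moduli rather than over factorizations, which is exactly what the paper does. There, one first removes all $p$ for which some prime $q\ge L$ divides $i_g(p)$ (Chebotarev with modulus $q$ for $q<x^{1/2}/\log^{2}x$, Brun--Titchmarsh and the $\omega(a^{s}-b^{s})\ll s\log x$ count above that); for the remaining $p$ every prime factor of $i_g(p)$ is below $L$, so $i_g(p)$ has a divisor $d\in[L,L^{2}]$, and summing $\frac{(d,h)}{d\phi(d)}$ over each such $d$ once gives $\ll h\tau(h)/(L\phi(h))$ by precisely the divisor-sum manipulation you had in mind (the paper's estimate \eqref{eq:sum-estimate}), with no multiplicity and only $O(L^{2})$ Chebotarev error terms. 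Alternatively you may keep your minimal-divisor witness $k$, but then apply Theorem~\ref{thm:pnt} to the distinct admissible $k$ themselves (each once), using $\sum_{k\ge L}(k,h)/(k\phi(k))\ll h\tau(h)/(L\phi(h))$. Your error management, the use of Wagstaff's formula via Proposition~\ref{prop:Wagstaff}, and the uniformity discussion for $|a|,|b|\le x$ are otherwise in line with the paper.
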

\begin{proof}
  Since the proof is rather similar to the proof of the main theorem
  in \cite{hooley-artin}, 
  \cite[Theorem~2]{artincat}, and \cite[Theorem~23]{power-generator},
we only give a brief outline.
We see that $\ord{g}{p} \leq (p-1)/L$ implies that 
$i_g(p)\ge L$.  Further, in the case that $p\mid ab$, where
we are defining $\ord{g}{p}=1$ and hence $i_g(p)=p-1$, the number
of primes $p$ is $O(\log x)$.  So we assume that
$p\nmid ab$.

{\em First step:}  Consider primes $p\le x$ such that 
$i_g(p) > x^{1/2} \log^{2} x$.  
Such a prime $p$ divides $a^k-b^k$ for some positive integer
$k<x^{1/2}/\log^2x$.  Since $\omega(|a^k-b^k|)\ll k\log x$,
it follows that the number of primes $p$ in this case is
$O((x^{1/2}/\log^2x)^2\log x)=O(x/\log^3x)$.

{\em Second step:} Consider primes $p$ such that $q\mid i_g(p)$ 
for some prime $q
$ in the interval $I:= [ \frac{x^{1/2}}{\log^2 x}, x^{1/2} \log^{2} x ]$. We
may bound this by considering primes 
$p \leq x$ such that $p \equiv  1~({\rm mod}~ q)$ for
some prime $q \in I$.
The Brun--Titchmarsh inequality then gives
that the number of  such primes $p$ is at most
a constant times
\begin{equation*}
\sum_{ q \in I  }
\frac{x}{\phi(q) \log(x/q)}
~\ll
~\frac{x}{\log x}
\sum_{ q \in I  }
\frac{1}{q}
~\ll
~\frac{x \log\log x}{\log^2 x}.
\end{equation*}

{\em Third step:} Now consider primes $p$ such that $q\mid i_g(p)$ 
for some prime
$q$ in the interval $[ L, \frac{x^{1/2}}{\log^2 x})$.
In this range we use Proposition~\ref{prop:Wagstaff} and
Theorem~\ref{thm:pnt} to get on the GRH that
$$
|\{ p \leq x : q \mid i_g(p)  \}|
~\ll 
~\frac{\pi(x) (q,h)}{q \phi(q)} +  x^{1/2} \log x .
$$
Summing over primes $q$, we find that the number of such $p$  is 
bounded by a constant times
\begin{equation*}
\sum_{ q \in [ L, \frac{x^{1/2}}{\log^2 x})  }
\left( 
  \frac{\pi(x)(q,h)}{q^{2}} +  x^{1/2} \log x
\right)
~\ll
~\frac{\pi(x) \omega(h)}{L}
+
\frac{x }{\log^2 x}.
\end{equation*}

{\em Fourth step:} For the remaining primes $p$, any prime divisor
$q\mid i_g(p)$ is smaller than $L$. Hence $i_g(p)$ must be divisible by
some integer $d$ in the interval $[L,L^2]$.  
By Proposition~\ref{prop:Wagstaff} and 
Theorem~\ref{thm:pnt}, assuming the GRH, we have
\begin{equation}
\label{chebotarev}
|\{ p \leq x : d \mid i_g(p)  \}|
~\le 
~2\frac{\pi(x)(d,h)}{d \phi(d)} + O( x^{1/2} \log x ).
\end{equation}
Hence the total number of such $p$  is bounded by 
\begin{equation*}
\sum_{ d \in [L,L^2]  }
\left( 
 2 \frac{\pi(x)(d,h)}{d \phi(d)} + O( x^{1/2} \log x )
\right)
~\ll
~\frac{\pi(x)}{L}\frac{h\tau(h)}{\phi(h)},
\end{equation*}
where the last estimate follows from
\begin{multline}
\label{eq:sum-estimate}
\sum_{ d \in [L,L^2]  }
\frac{(d,h)}{d \phi(d)}
\leq
\sum_{m|h} 
\sum_{\substack{ d \in [L,L^2]\\ m|d}  }
\frac{m}{d \phi(d)}
\le
\sum_{m|h}\sum_{k\ge L/m}\frac{1}{\phi(m)k\phi(k)}\\
\ll
\sum_{m|h}\frac{m}{L\phi(m)}
=\frac{h}{L\phi(h)}\sum_{m|h}\frac{m}{\phi(m)}\cdot\frac{\phi(h)}{h}
\le\frac{h\tau(h)}{L\phi(h)}.
\end{multline}
Here we used the
bound $\sum_{k \geq T} \frac{1}{k\phi(k)} \ll 1/T$ for $T>0$, 
which follows by an elementary argument 
from the bound $\sum_{k\ge T}\frac{1}{k^2}\ll 1/T$
and the identity $k/\phi(k)=\sum_{j|k}\frac{\mu^2(j)}{\phi(j)}$.
\end{proof}

\begin{cor}
\label{cor:recip}
Assume the GRH is true.  Let $m\ge2$ be an integer and $x\ge3$
a real number.  Let $y=\log\log x$ and assume that $m\le\log y/\log\log y$.  
Let $g=a/b\ne0,\pm1$ where
$a,b$ are integers with $|a|,|b|\le\exp((\log x)^{3/m})$, 
and let $h$ be as above.
Then uniformly,
$$
\sum_{\substack{p\le x\\ P(i_g(p))>m}}
\frac1p\ll{y}\left(\frac1{m}+\sum_{q\mid h,~q>m}\frac1q\right).
$$
\end{cor}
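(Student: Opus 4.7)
The plan is to follow the three-range structure of the proof of Theorem~\ref{thm:uniform-order-on-grh}, exploiting the fact that we only need prime divisors $q > m$ of $i_g(p)$, which tames the divisor sum considerably. Let $A(t) := |\{p \le t : P(i_g(p)) > m\}|$, so that partial summation yields
\[
\sum_{\substack{p \le x\\ P(i_g(p)) > m}}\frac{1}{p} = \frac{A(x)}{x} + \int_2^x \frac{A(t)}{t^2}\,dt.
\]
It suffices to establish, uniformly for $t \le x$,
\[
A(t) \ll \pi(t)\left(\frac{1}{m} + \sum_{\substack{q \mid h\\ q > m}}\frac{1}{q}\right) + \frac{t\log\log t}{(\log t)^2},
\]
since the second term integrates to $O(1)$, which is negligible compared to $y(1/m + \sum_{q\mid h, q>m} 1/q)$, as the latter tends to infinity under the hypothesis $m \le \log y/\log\log y$.

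For the bound on $A(t)$, every $p$ counted has some prime $q > m$ with $q \mid i_g(p)$, and I split into three ranges. \emph{Range 1:} $m < q \le \sqrt{t}/(\log t)^2$. For each such $q$, Theorem~\ref{thm:pnt} combined with Proposition~\ref{prop:Wagstaff} gives
\[
|\{p \le t : q \mid i_g(p)\}| \ll \pi(t)\frac{(q,h)}{q\phi(q)} + t^{1/2}(\log t)^{3/m},
\]
where the height-dependent factor $(\log t)^{3/m}$ is absorbed by the hypothesis $|a|,|b| \le \exp((\log x)^{3/m})$. Summing over $q$ and using
\[
\sum_{q > m}\frac{(q,h)}{q\phi(q)} = \sum_{\substack{q > m\\ q \nmid h}}\frac{1}{q\phi(q)} + \sum_{\substack{q > m\\ q \mid h}}\frac{1}{\phi(q)} \ll \frac{1}{m} + \sum_{\substack{q \mid h\\ q > m}}\frac{1}{q}
\]
gives the main contribution, with summed Chebotarev error $\ll t/(\log t)^{3/2}$. \emph{Range 2:} $\sqrt{t}/(\log t)^2 < q \le \sqrt{t}(\log t)^2$. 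The condition $q \mid i_g(p)$ implies $p \equiv 1 \pmod q$, and Brun--Titchmarsh summed over this dyadic range yields $\ll t\log\log t/(\log t)^2$, as in the second step of Theorem~\ref{thm:uniform-order-on-grh}. \emph{Range 3:} $q > \sqrt{t}(\log t)^2$ forces $\ell_g(p) < \sqrt{t}/(\log t)^2$, so $p \mid a^k - b^k$ for some such $k$; using $\omega(a^k - b^k) \ll k(\log x)^{3/m}$, the count is $\ll t/(\log t)^{5/2}$.

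The main obstacle is ensuring that the Range 1 summation yields the \emph{asymmetric} bound $1/m + \sum_{q\mid h, q>m} 1/q$, in place of the cruder $h\tau(h)/(L\phi(h))$ produced in the proof of Theorem~\ref{thm:uniform-order-on-grh}; the gain comes from restricting to prime $q$, so that the $(d,h)/(d\phi(d))$-sum splits cleanly into $q \mid h$ and $q \nmid h$ pieces with different decay rates. A secondary point is verifying that the precise exponent $3/m$ in the height bound allows both the Range 1 Chebotarev error and the Range 3 polynomial-divisor count to be swallowed into negligible error terms; this is why the hypothesis couples $m$ and $\log\max(|a|,|b|)$ in exactly this way.
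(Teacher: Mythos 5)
Your three-range treatment at a single scale, and in particular the splitting of $\sum_q (q,h)/(q\phi(q))$ into the $q\nmid h$ and $q\mid h$ pieces to produce the asymmetric bound $1/m+\sum_{q\mid h,\,q>m}1/q$, is exactly the mechanism the paper uses. The genuine gap is in your reduction step: you replace the reciprocal sum by the single count $A(t)$ and claim $A(t)\ll \pi(t)\bigl(\frac1m+\sum_{q\mid h,\,q>m}\frac1q\bigr)+\frac{t\log\log t}{(\log t)^2}$ \emph{uniformly for all} $t\le x$. That intermediate claim is not just unproved by your ranges, it is false. The height of $g$ is bounded in terms of $x$, not of $t$: Theorem~\ref{thm:pnt} applied at scale $t$ needs $|a|,|b|\le t$, and the GRH error for a single $q$ is really of size $t^{1/2}(\log t+\log|ab|)\asymp t^{1/2}\bigl(\log t+(\log x)^{3/m}\bigr)$, not $t^{1/2}(\log t)^{3/m}$; similarly your Range~3 count is $\ll t(\log x)^{3/m}/(\log t)^4$. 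When $\log t$ is small compared with $(\log x)^{3/m}$ these error terms swamp $\pi(t)/m$. Moreover the statement itself can fail: take $a\equiv 1 \pmod{\prod_{p\le t_0}p}$ with $t_0\asymp(\log x)^{3/m}$ (so $|a|\le\exp((\log x)^{3/m})$ and, arranging $a$ not to be a proper power, $h=1$); then $i_g(p)=p-1$ for every $p\le t_0$, so $P(i_g(p))>m$ for all but a negligible set of $p\le t\le t_0$, whence $A(t)\gg\pi(t)$, contradicting your claimed bound since $m$ may grow (up to $\log y/\log\log y$) and the secondary term is $o(\pi(t))$. So partial summation against one uniform count bound cannot be pushed down to $t=2$.

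The missing idea --- and how the paper's own proof goes --- is to discard the initial segment of primes trivially: by Mertens, $\sum_{p\le\exp((\log x)^{3/m})}1/p=\frac{3y}{m}+O(1)$, which already fits inside the target bound $y(1/m+\cdots)$, and only at larger scales (where $\log|ab|\ll(\log x)^{3/m}$ is dominated by a fixed power of $\log t$) does one run the three-range argument. Concretely, the paper works directly with the reciprocal sum, cutting off the small primes by Mertens and decomposing the rest into intervals $({\rm e}^{j},{\rm e}^{j+1}]$, applying Brun--Titchmarsh and the GRH estimate at each scale $t={\rm e}^{j+1}$ and summing over $j$; in the large-index range the factor $\log|ab|\ll(\log x)^{3/m}$ is tracked explicitly and absorbed by the extra powers of $j$, which is precisely why the hypothesis couples the height to $(\log x)^{3/m}$. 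If you introduce such a cutoff and assert your uniform bound on $A(t)$ only for $t\ge\exp\bigl(c(\log x)^{3/m}\bigr)$ (handling smaller $t$ by Mertens), your argument can be repaired and then essentially coincides with the paper's.
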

\begin{proof}
This result is more a corollary of the proof of 
Theorem~\ref{thm:uniform-order-on-grh}
than its statement.  We consider intervals $I_j:=({\rm e}^j,{\rm e}^{j+1}]$ for 
$j\le\log x$, $j$ a non-negative integer.  The sum of reciprocals of
all primes $p\le\exp((\log x)^{1/m})$ is $y/m+O(1)$, so this contribution
to the sum is under control.  We thus may restrict to the consideration
of primes $p\in I_j$ for $j>(\log x)^{1/m}$.  For such an integer $j$,
let $t={\rm e}^{j+1}$.  If $q\mid i_g(p)$ for some prime $q>t^{1/2}\log^2 t$,
then $\ord{g}{p}\le t^{1/2}/\log^2t$, and the number of such primes is
$O(\sum_{k\le t^{1/2}/\log^2t}k\log|ab|)=O(t\log|ab|/\log^4t)$, 
so that the sum of their reciprocals is
$O(\log|ab|/\log^4t)=O((\log x)^{3/m}/j^4)$.  
Summing this for $j>(\log x)^{1/m}$, we get $O(1)$, which is acceptable.

For $J:=(t^{1/2}/\log^2t,t^{1/2}\log^2t]$, with $t={\rm e}^{j+1}$, we
have that the reciprocal sum of the primes $p\in I_j$ with some $q\in J$
dividing $i_g(p)$ (so that $q\mid p-1$)
is $O(\log\log t/\log^2t)=O(\log j/j^2)$.  Summing
this for $j>(\log x)^{1/m}$ is $o(1)$ as $x\to\infty$ and is 
acceptable.

For $q\le t^{1/2}/\log^2t$ we need the GRH.  As in the proof of
Theorem~\ref{thm:uniform-order-on-grh}, the number of primes $p\in I_j$
with $q\mid i_g(p)$ is bounded by a constant times
$$
\frac{t}{\log t}\frac{(q,h)}{q^2}+t^{1/2}\log t.
$$
Thus, the reciprocal sum of these primes $p$ is 
$$
O\left(\frac{(q,h)}{q^2\log t}+\frac{\log t}{t^{1/2}}\right)
=O\left(\frac{(q,h)}{q^2j}+\frac{j}{{\rm e}^{j/2}}\right).
$$
We sum this expression over primes $q$ with $m<q\ll {\rm e}^{j/2}/j^2$ getting
$$
O\left(\frac1{jm\log m}+\frac1j\sum_{q\mid h,~q>m}\frac1q+\frac1{j^2}\right).
$$
Summing on $j\le\log x$ completes the proof.
\end{proof}

\section{Proof of Theorem~\ref{thm:main}}
\label{sec:some-notation}
Let $x$ be large and let $g$ be an integer with $1<|g|\le \log x$.
Define
$$
y = \log\log x, 
\quad m = \lfloor y/\log^3 y \rfloor,
\quad D = m!,
$$ 
and let 
$$
S_k = \{ p \leq x\, :\, (p-1,D) = 2k \}.
$$ 
Then $S_{1}, S_{2}, \ldots, S_{D/2}$ are disjoint sets of
primes whose union equals $\{ 2<p \leq x\}$.  
Let
\begin{equation}
\label{eq:Skalt}
\tilde{S}_k=\left\{p\in S_k\,:\, p\nmid g,~\frac{p-1}{2k}~\Big{|}~\ord{g}{p}\right\}
\end{equation}
be the subset of $S_k$ where $\ord{g}{p}$ is ``large."  Note that if 
$k\le \log y$, $p\in S_k\setminus\tilde{S}_k$, 
and $p\nmid g$, there is some prime
$q>m$ with
$q\mid(p-1)/\ord{g}{p}$, so that $P(i_g(p))>m$.
Indeed, since $k\le\log y$, each prime dividing $D$ also divides
$D/(2k)$, so that $(p-1,D)=2k$ implies that the least prime factor
of $(p-1)/(2k)$ exceeds $m$.

Thus, from Theorem~\ref{thm:uniform-order-on-grh},
$$
|S_k\setminus\tilde{S}_k|\le
|\{ p \leq x : \ord{g}{p} < p/m \}|+\sum_{p\mid g}1 
\ll \frac{\pi(x)}{m}\cdot\frac{h\tau(h)}{\phi(h)}
$$
uniformly for $k\le\log y$.
Using this it is easy to see that $S_{k}$ and $\tilde{S}_k$ are of
similar size when $k$ is small.  However, we shall essentially measure
the ``size" of $S_k$ or $\tilde{S}_k$ by the sum of the 
reciprocals of its members and for this we will use Corollary~\ref{cor:recip}.  
We define
$$
{E}_k := 
\sum_{\substack{p \in {S}_k \\1< p^\alpha \leq x}}
\frac{1}{p^\alpha}
$$
and
$$
\tilde{E}_k := 
\sum_{\substack{p \in \tilde{S}_k \\1< p^\alpha \leq x}}
\frac{1}{p^\alpha}.
$$
By Lemma~1 of \cite{carmichael-lambda}, uniformly for $k \leq \log^2 y$,
\begin{equation}
\label{eq:Ek}
E_{k} = \frac{y}{\log y } \cdot P_{k} \cdot (1+o(1))
\end{equation}
where 
\begin{equation}
\label{eq:Pk}
P_{k} = \frac{{\rm e}^{-\gamma}}{k} \prod_{q>2}
\left(1-\frac{1}{(q-1)^{2}} \right) 
\prod_{q|k,\, q>2} \frac{q-1}{q-2}.
\end{equation}
Note that, with $B$ given by~\eqref{eq:B},
\begin{equation}
\label{eq:Bident}
\sum_{k=1}^\infty\frac{P_k}{2k}=B.
\end{equation}

The following lemma shows that not much is lost when restricting to
primes $p \in \tilde{S}_{k}$.
\begin{lem}
\label{lem:Ek-equals-Ek-tilde}
For $k \leq \log y$, we uniformly have
$$
\tilde{E}_k = E_k \cdot
\left(1 + O \left( \frac{\log^{5} y}{y} \right).
\right)
$$
\end{lem}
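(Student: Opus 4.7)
The plan is to bound $E_k-\tilde E_k$ by the reciprocal sum of the ``missing'' primes and to estimate this sum via a version of Corollary~\ref{cor:recip} restricted to the arithmetic progression $p\equiv 1\pmod{2k}$. First, the prime powers $p^\alpha$ with $\alpha\geq 2$ contribute only $O(1)$ to $E_k-\tilde E_k$, while primes $p\mid g$ contribute $\sum_{p\mid g}1/p\leq \sum_{p\leq|g|}1/p \ll\log\log|g|\ll\log y$. For every other $p\in S_k\setminus\tilde S_k$, the observation in the text (using that for $k\leq\log y$ every prime $\leq m$ that divides $p-1$ also divides $2k$) shows that $P(i_g(p))>m$, and by definition of $S_k$ we have $2k\mid p-1$. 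Hence the problem reduces to bounding
$$T_k:=\sum_{\substack{p\leq x\\ 2k\mid p-1\\ P(i_g(p))>m}}\frac{1}{p}.$$

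The main step is the refined estimate
$$T_k\ll\frac{y}{\phi(2k)}\left(\frac{1}{m}+\sum_{\substack{q\mid h\\ q>m}}\frac{1}{q}\right),$$
obtained by re-running the proof of Corollary~\ref{cor:recip} with the added progression condition. Since $m\gg y/\log^3 y\gg 2k$ for large $x$, every prime $q>m$ that appears is coprime to $2k$. In the GRH-based Chebotarev step, Theorem~\ref{thm:pnt} is applied to the compositum $K_q\cdot\Q(\zeta_{2k})$; because the maximal abelian subfield of $K_q=\Q(\zeta_q,g^{1/q})$ is $\Q(\zeta_q)$, which meets $\Q(\zeta_{2k})$ trivially, the compositum has degree $\phi(2k)D_g(q)$, producing the extra factor $1/\phi(2k)$. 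The same factor arises in the Brun--Titchmarsh middle range by sieving in the combined progression $p\equiv1\pmod{2kq}$, and also in the initial Mertens sum over $p\leq\exp((\log x)^{1/m})$ with $p\equiv1\pmod{2k}$, which is $\ll\log\log z/\phi(2k)=(y/m)/\phi(2k)$.

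Finally, plug in estimates. Since $|g|\leq\log x$ gives $h\leq y/\log 2$, we have $\omega(h)\ll\log y/\log\log y$, whence $\sum_{q\mid h,\,q>m}1/q\leq\omega(h)/m\ll\log^4 y/(y\log\log y)$, while $1/m\ll\log^3 y/y$. Using $1/\phi(2k)\ll\log\log(2k)/k\ll\log\log\log y/k$ for $k\leq\log y$, these yield $T_k\ll\log^4 y\log\log\log y/(k\log\log y)\ll\log^4 y/k$. On the other hand, \eqref{eq:Ek} and \eqref{eq:Pk} give $E_k\gg y/(k\log y)$, so $E_k\cdot\log^5 y/y\gg\log^4 y/k$, and thus $E_k-\tilde E_k\ll E_k\log^5 y/y$, as desired. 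The main obstacle is the restricted Chebotarev step: one must carefully track the degree of $K_q\cdot\Q(\zeta_{2k})$, which requires identifying the Galois intersection $K_q\cap\Q(\zeta_{2k})$ and verifying that the error term in the effective Chebotarev estimate remains usable for the compositum.
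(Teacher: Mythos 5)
Your argument is correct in substance, but it takes a longer route than necessary and leans on a strengthening of the paper's tools that you only sketch. The paper's own proof observes that for all $k\le\log y$ one has $E_k\gg y/(k\log y)\ge y/\log^{2}y$, so that $E_k\cdot\log^{5}y/y\gg\log^{3}y$; hence it suffices to prove the $k$-independent bound $\sum_{p\in S_k\setminus\tilde{S}_k}1/p\ll\log^{3}y$. That bound follows at once from Corollary~\ref{cor:recip} \emph{as stated}, with no restriction to the progression $p\equiv1\ \mathrm{mod}\ 2k$: since $|g|\le\log x$ forces $h\ll y$, the integer $h$ has at most one prime factor exceeding $m$, so the corollary gives $\ll y(1/m+1/m)\ll\log^{3}y$, while the primes $p\mid g$ and the prime powers contribute only $O(\log y)$ and $O(1)$. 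You instead aim for the sharper bound $\ll\log^{4}y/k$, which requires a variant of Corollary~\ref{cor:recip} in the progression $p\equiv1\ \mathrm{mod}\ 2k$ carrying an extra factor $1/\phi(2k)$. Your closing numerics are fine, and your key algebraic claim is essentially right: for the odd primes $q>m$ in play (so $q$ coprime to $2k$) one has $K_q\cap\Q(\zeta_{2k})=\Q$, both when $[K_q:\Q(\zeta_q)]=q$ (the commutator subgroup of the Galois group is the full translation subgroup, so the maximal abelian subfield is $\Q(\zeta_q)$) and in the degenerate case $K_q=\Q(\zeta_q)$. But this step genuinely goes beyond Theorem~\ref{thm:pnt} as stated, so to make the refined corollary rigorous you would need to supply the effective Chebotarev estimate (with usable error term and the requisite GRH input) for the composita $K_q\cdot\Q(\zeta_{2k})$, together with the Mertens and Brun--Titchmarsh adjustments you describe. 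In short: no error, but the $1/\phi(2k)$ saving you work hard to extract is unnecessary, and dropping it lets you quote Corollary~\ref{cor:recip} verbatim, which is exactly what the paper does.
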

\begin{proof}
By \eqref{eq:Ek} and \eqref{eq:Pk}, we have
\begin{equation}
\label{eq:Ek-tilde-lower-bound}
E_k \gg \frac{y}{k \log y}
\geq \frac{y}{ \log^{2} y},
\end{equation}
and it is thus sufficient to show that 
$\sum_{p \in S_k  \setminus   \tilde{S}_k} 1/p \ll \log^3 y$ since
the contribution from prime powers $p^{\alpha}$ for $\alpha \geq
2$ is $O(1)$.   As we have seen, if $k\le\log y$ and 
$p \in S_k  \setminus   \tilde{S}_k$ then
either $p\mid g$ or $P(i_g(p))>m$.
Hence, using Corollary~\ref{cor:recip} and noting that the hypothesis
$|g|\le \log x$ implies that $h\ll y$ and so $h$ has at most one
prime factor $q>m$, we have
$$
\sum_{p \in E_k  \setminus   \tilde{E}_k} \frac1p  
\ll 
\frac{y}{m} = 
\frac{y}{\lfloor y/\log^3 y\rfloor} \ll \log^3 y.
$$
This completes the proof.
\end{proof}

\begin{lem}
\label{lem:sum-Ek-over-k}
We have
$$
\sum_{k\le\log y} \frac{{E}_k}{2k} 
=  \frac{B y}{\log y} (1 + o(1))
$$  
where $B$ is given by \eqref{eq:B}.
\end{lem}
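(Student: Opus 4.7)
The plan is to split the argument into two easy pieces: first replace $E_k$ by its asymptotic evaluation from \eqref{eq:Ek}, and then recognize the resulting partial sum of $P_k/(2k)$ as a partial sum of the convergent series whose total is $B$ via \eqref{eq:Bident}.

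First I would observe that the range $k\le\log y$ is contained in the range $k\le\log^2 y$ where \eqref{eq:Ek} holds uniformly, so I may write
\begin{equation*}
\sum_{k\le\log y}\frac{E_k}{2k}
=\frac{y}{\log y}(1+o(1))\sum_{k\le\log y}\frac{P_k}{2k},
\end{equation*}
with the $o(1)$ independent of $k$. The task then reduces to showing that $\sum_{k\le\log y}P_k/(2k)\to B$ as $x\to\infty$; in view of \eqref{eq:Bident}, this is equivalent to showing that the tail $\sum_{k>\log y}P_k/(2k)=o(1)$.

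Next I would bound $P_k$ using its explicit form \eqref{eq:Pk}. Since
\begin{equation*}
\prod_{q\mid k,\,q>2}\frac{q-1}{q-2}\le\prod_{q\mid k}\Bigl(1+\frac{2}{q}\Bigr)\ll\log\log(k+10)
\end{equation*}
by a standard Mertens-type estimate, we obtain $P_k\ll\log\log(k+10)/k$, and hence $P_k/(2k)\ll\log\log(k+10)/k^2$. This already shows absolute convergence of the series defining $B$, and moreover
\begin{equation*}
\sum_{k>\log y}\frac{P_k}{2k}\ll\sum_{k>\log y}\frac{\log\log(k+10)}{k^2}\ll\frac{\log\log\log y}{\log y}=o(1).
\end{equation*}

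Combining these two steps gives $\sum_{k\le\log y}P_k/(2k)=B+o(1)=B(1+o(1))$, since $B>0$, and therefore
\begin{equation*}
\sum_{k\le\log y}\frac{E_k}{2k}=\frac{By}{\log y}(1+o(1)),
\end{equation*}
as required. There is no real obstacle here: the estimate \eqref{eq:Ek} is given to us and the only technical input is the elementary tail bound for the series $\sum P_k/(2k)$, which is immediate from the crude bound on the $\prod_{q\mid k}(q-1)/(q-2)$ factor in \eqref{eq:Pk}.
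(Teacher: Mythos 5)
Your argument is correct and follows essentially the paper's own route: the paper simply states that the lemma follows from \eqref{eq:Ek}, \eqref{eq:Pk}, and \eqref{eq:Bident}, and your write-up just supplies the tail estimate that makes this immediate. Two harmless slips worth noting: the inequality $(q-1)/(q-2)\le 1+2/q$ fails at $q=3$ (the factor $2$ there must be absorbed into the implied constant), and the Mertens-type bound actually gives $\prod_{q\mid k}(1+2/q)\ll(\log\log(k+10))^{2}$ rather than $\log\log(k+10)$, so the tail is $\ll(\log\log\log y)^{2}/\log y$, which is still $o(1)$ and leaves the conclusion unchanged.
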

\begin{proof}
This follows immediately from \eqref{eq:Ek}, \eqref{eq:Pk}, and
\eqref{eq:Bident}.
\end{proof}

Given a vector $\bj = (j_{1},j_{2},\ldots,j_{D/2})$
with each $j_{i} \in \Z_{\geq 0}$, let 
$$
\|\bj\| :=
j_{1}+j_{2}+\ldots+j_{D/2}.
$$
Paralleling the notation $\Omega_i(x;\bj)$ from
\cite{carmichael-lambda}, let:
%
%
%
%
\begin{itemize}
\item  $\tilde{\Omega}_{1}(x; {\bf j})$ be the
set of integers that can be formed  by taking products of $v = \|\bj\|$ distinct
primes $p_{1}, p_{2}, \ldots, p_{v}$ in such a way that:
\begin{itemize}
\item for each $i$, $p_{i} < x^{1/y^{3}}$, and
\item the first $j_{1}$ primes are in $\tilde{S}_{1}$, the next $j_{2}$ are in $\tilde{S}_{2}$, etc.;
\end{itemize}

\item  $\tilde{\Omega}_{2}(x; {\bf j})$ be the set of integers
  $u=p_{1}p_{2}\cdots p_{v} \in \tilde{\Omega}_{1}(x; {\bf j})$ such that 
  $(p_{i}-1,p_{j}-1)$ divides $D$ for all $i \neq j$;

\item  $\tilde{\Omega}_{3}(x; {\bf j})$ be the set of integers of the form
  $n=up$ where $u \in  \tilde{\Omega}_{2}(x; {\bf j})$ and $p$
  satisfies $(p-1,D)=2$, $\max(x/2u, x^{1/y}) < p \leq x/u$
  and $\ord{g}{g}{p} > p/y^2$;

\item  $\tilde{\Omega}_{4}(x; {\bf j})$ be the set of integers
$n=(p_{1}p_{2}\cdots p_{v}) p$ in $\tilde{\Omega}_{3}(x; {\bf j})$ with the
additional property that $(p-1,p_{i}-1)=2$ for all $i$.
\end{itemize}

\subsection{Some lemmas}
\label{sec:preliminary-lemmas}

We shall also need the following analogues of Lemmas 2-4 of
\cite{carmichael-lambda}.  
Let
$$
\bJ:=\{\bj:0\leq j_{k}\leq{E}_{k}/k\text{ for $k\le\log y$, 
and $j_{k}=0$ for $k>\log y$}\}.
$$
%
\begin{lem}
\label{lem:two-prime}
If $\bj \in \bJ$, $n \in \tilde{\Omega}_4(x; \bj)$, and $x\ge x_1$, then
$$
\ord{g}{n}
\ge
c_1\frac{x}{y^3 }  
\prod_{k\le\log y} (2k)^{-j_k},
$$
where $x_1, c_1>0$ are absolute constants.
\end{lem}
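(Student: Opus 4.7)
The plan is to use CRT to write $\ord{g}{n} = \lcm(\ord{g}{p_1},\ldots,\ord{g}{p_v},\ord{g}{p})$ and then bound this lcm from below by isolating the $m$-rough parts of the individual orders, which turn out to be pairwise coprime thanks to the structural conditions defining $\tilde{\Omega}_2$ and $\tilde{\Omega}_4$. First I would check that $n=up$ with $u=p_1\cdots p_v$ is squarefree and coprime to $g$: since $p>x^{1/y}>x^{1/y^3}\ge p_i$, the prime $p$ is distinct from each $p_i$, while the conditions defining $\tilde{S}_{k_i}$ and $\tilde{\Omega}_3$ give $p_i\nmid g$ and $p\nmid g$.

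The key structural step is to show that for each $i$ the $m$-smooth part of $p_i-1$ is exactly $2k_i$, so that $r_i:=(p_i-1)/(2k_i)$ is $m$-rough. This uses the condition $(p_i-1,D)=2k_i$ from $S_{k_i}$ together with the estimate $v_q(D)\ge\lfloor m/q\rfloor>v_q(2k_i)$ for every prime $q\le m$, which holds because $m=\lfloor y/\log^3 y\rfloor$ dwarfs $2k_i\le 2\log y$; from this one deduces $v_q(p_i-1)=v_q(2k_i)$ for all $q\le m$. Since $r_i\mid\ord{g}{p_i}\mid p_i-1$ and $r_i$ is $m$-rough, the $m$-rough part of $\ord{g}{p_i}$ is exactly $r_i$. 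The analogous computation with $(p-1,D)=2$ shows $(p-1)/2$ is $m$-rough, and combined with $\ord{g}{p}>p/y^2$ it follows that the $m$-rough part $r'$ of $\ord{g}{p}$ satisfies $r'>p/(2y^2)$. By the $\tilde{\Omega}_2$ property $(p_i-1,p_j-1)\mid D$, the $r_i$ are pairwise coprime (a common prime factor would divide $D$, hence be at most $m$), and by $\tilde{\Omega}_4$ the identity $(p-1,p_i-1)=2$ makes $r'$ coprime to each $r_i$. Hence these rough parts survive intact in the lcm, yielding
$$\ord{g}{n}\;\ge\;\Bigl(\prod_{i=1}^v r_i\Bigr)\cdot r'\;>\;\frac{\prod_i(p_i-1)}{\prod_i 2k_i}\cdot\frac{p}{2y^2}.$$

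To finish I would estimate $\prod_i(p_i-1)=u\prod_i(1-1/p_i)\gg u/\log y$ using Mertens' theorem, exploiting that the $p_i$ are distinct odd primes and $v\ll y/\log y$ (a consequence of $\bj\in\bJ$ and Lemma~\ref{lem:sum-Ek-over-k}), together with $up>x/2$ from the definition of $\tilde{\Omega}_3$. This gives $\ord{g}{n}\gg x/(y^2\log y\prod_i 2k_i)$, which is comfortably stronger than the target $c_1 x/(y^3\prod_k(2k)^{j_k})$ since $\prod_i 2k_i=\prod_k(2k)^{j_k}$ and $y^2\log y\ll y^3$. The main obstacle I anticipate is the smooth-part identification in the middle paragraph: everything downstream rests on the clean statement ``$r_i$ equals the $m$-rough part of $\ord{g}{p_i}$,'' and verifying this requires the careful $q$-by-$q$ comparison of $v_q(D)$ with $v_q(2k_i)$ across the full range of primes $q\le m$.
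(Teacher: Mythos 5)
Your proposal is correct and follows essentially the same route as the paper: write $\ord{g}{n}$ as an lcm, use the defining condition of $\tilde{S}_{k}$ to get $(p_i-1)/(2k_i)\mid \ord{g}{p_i}$, deduce pairwise coprimality of these quotients and of $p-1$ from $(p_i-1,p_j-1)\mid D$ and $(p-1,p_i-1)=2$ together with the fact (which the paper records just after \eqref{eq:Skalt}) that $(p_i-1,D)=2k_i$ forces $(p_i-1)/(2k_i)$ to have no prime factor $\le m$, and finish using $up>x/2$ and $\ord{g}{p}>p/y^2$. The only cosmetic difference is the final estimate, where the paper invokes the minimal order of Euler's function, $\phi(n)\gg n/\log\log n$, while you use Mertens together with $\|\bj\|\ll y/\log y$; both comfortably yield the stated bound.
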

\begin{proof}
Suppose that  $n = (p_1 p_2 \cdots p_v) p \in \tilde{\Omega}_4(x; \bj)$.
Let $d_i = (p_i-1,D)$, and let $u_i := (p_i-1)/d_i$.  By \eqref{eq:Skalt},
$u_i$ divides $\ord{g}{p_i}$ for all $i$, and by the definition of
$\tilde{\Omega}_3(x;\bj)$ we also have $\ord{g}{p} >p/y^2$.
Since $(p-1)/2$ is coprime to $(p_i-1)/2$ for each $i$ and each
$(p_i-1,p_j-1)\mid D$ for $i\ne j$, we have $u_1,\dots,u_v,p-1$
pairwise coprime.  But
$$
\ord{g}{n} = \lcm[ \ord{g}{p_1}, \ord{g}{p_2}, \ldots, \ord{g}{p_v}, \ord{g}{p}],
$$
so we find that, 
using the minimal order of Euler's function and $\ord{g}{p}> p/y^2$,
\begin{multline*}
\ord{g}{n} \geq u_1 u_2 \cdots u_v \ord{g}{p} \geq
\frac{\phi(n)}
{y^2 \cdot \prod_{i=1}^v d_i } 
\\
\gg \frac{n}{y^2\cdot\log \log n \cdot
 \prod_{k=1}^l (2k)^{j_k}}
\gg \frac{x}{y^3 \cdot  \prod_{k=1}^l (2k)^{j_k}}
\end{multline*}
(recalling that $d_{i} = (p_{i}-1,D) = 2k$ if $p_{i} \in \tilde{S}_k$,
and that $n \in \tilde{\Omega}_4(x; \bj)$ implies that $n>x/2$).
\end{proof}

\begin{lem}
\label{lem:three-prime}
If $\bj \in \bJ$, $u \in \tilde{\Omega}_2(x; \bj)$, and $x\ge x_2$, then
$$
|\{ p: up \in \tilde{\Omega}_4(x; \bj) \}| > c_2x/(uy \log x)
$$
where $x_2, c_2>0$ are absolute constants.
\end{lem}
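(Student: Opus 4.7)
My plan is to parallel Lemma~4 of~\cite{carmichael-lambda}, which handles the analogous count without the order condition $\ord{g}{p}>p/y^2$, and to augment it with Theorem~\ref{thm:uniform-order-on-grh} to handle that extra constraint on GRH. Write $d_i=(p_i-1,D)=2k_i$, $e_i=(p_i-1)/d_i$, and $M=\prod_i e_i$; then $up\in\tilde{\Omega}_4(x;\bj)$ translates into the conjunction: $p\in(x/(2u),x/u]$ (the lower bound dominates $x^{1/y}$, since $u\le x^{O(1/(y^2\log y))}$ because $v=\|\bj\|\le\sum_k E_k/k\ll y/\log y$), $(p-1,DM)=2$, and $\ord{g}{p}>p/y^2$. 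By $u\in\tilde{\Omega}_2(x;\bj)$ the $e_i$ are pairwise coprime, and each $e_i$ has all prime factors exceeding~$m$.

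Second, I would apply a lower-bound sieve (for instance the fundamental lemma of sieve theory) to count primes in the interval satisfying $(p-1,DM)=2$. Since the sifting level $z\le\max_i(p_i-1)<x^{1/y^3}$ while the interval has length $\gg x^{1-o(1)}$, the ratio $\log(x/u)/\log z\gg y^3\to\infty$ places us comfortably in the fundamental-lemma regime. The main term is $\gg(x/u)/\log(x/u)$ times
\[
\frac{1}{2}\prod_{\substack{q\mid D\\ q>2}}\frac{q-2}{q-1}\cdot\prod_{q\mid M}\frac{q-2}{q-1}.
\]
Mertens' theorem gives the first product as $\asymp 1/\log y$, and the second product is to be shown bounded below by a positive constant using the structural control on the~$e_i$ and the constraint $p_i<x^{1/y^3}$. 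The outcome of this step is a lower bound of order $\gg x/(u\log x\log y)$ for the count of primes $p$ in the interval with $(p-1,DM)=2$.

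Third, I would apply Theorem~\ref{thm:uniform-order-on-grh} with $L=y^2$ on the range $(1,x/u]$. Since $|g|\le\log x$ gives $h\le y$ and hence $h\tau(h)/\phi(h)\le y^{1+o(1)}$, the number of primes $p\le x/u$ with $\ord{g}{p}\le(p-1)/y^2$ is $O(x/(uy^{1-o(1)}\log x))$, which is $o$ of the sieve main term since $\log y=o(y^{1-o(1)})$. Subtracting this from the step-two count leaves $\gg c_2 x/(uy\log x)$ primes $p$ with $up\in\tilde{\Omega}_4(x;\bj)$, as claimed.

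The hard part is controlling the medium-prime factor $\prod_{q\mid M}(1-1/(q-1))$ uniformly in $u$: $\omega(M)$ can be as large as $O(\log x/(y^2\log^2 y))$ and the primes of $M$ can be as small as~$m$, so a naive bound risks making the product too small and obliterating the main term. Executing this step cleanly---exploiting the pairwise coprimality of the $e_i$ forced by $\tilde{\Omega}_2$ and the sharpness of the fundamental lemma to extract the genuine main term---is the main technical obstacle.
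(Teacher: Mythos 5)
Your overall strategy---a sieve count for the coprimality conditions followed by removing the primes of small order via Theorem~\ref{thm:uniform-order-on-grh}---has the same shape as the paper's argument (the paper simply quotes Lemma~3 of \cite{carmichael-lambda} for the sieve count and only has to handle the order condition). But your second step contains a genuine quantitative error: the product $\prod_{q\mid M}\frac{q-2}{q-1}$ cannot be bounded below by a positive constant, and the factor $1/y$ in the lemma (rather than $1/\log y$) exists precisely because of this. Each $e_i=(p_i-1)/(p_i-1,D)$ has all prime factors exceeding $m\approx y/\log^3y$, but since $p_i<x^{1/y^3}$ and $\|\bj\|\ll y/\log y$, the integer $M=\prod_i e_i$ may have on the order of $\log x/(y^2\log y\log m)$ distinct prime factors, and nothing prevents them all from lying just above $m$. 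By Mertens, in that extreme case $\prod_{q\mid M}\bigl(1-\tfrac1{q-1}\bigr)\asymp \log m/\log\bigl(\log x/y^2\bigr)\asymp \log y/y\to0$. Hence the attainable main term is $\gg \frac{x}{u\log x}\cdot\frac1{\log y}\cdot\frac{\log y}{y}=\frac{x}{uy\log x}$, which is exactly the bound of the lemma (and of Lemma~3 of \cite{carmichael-lambda}); your claimed intermediate bound $\gg x/(u\log x\log y)$ is false in general.

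This propagates into your third step: against the true main term $\asymp x/(uy\log x)$, your bound $O\bigl(x/(uy^{1-o(1)}\log x)\bigr)$ for the exceptional primes is not $o$ of the main term---it is larger---so the subtraction as you set it up fails; it only appeared to work because an over-strong main term was paired with an over-weak bound $\psi(h)=h\tau(h)/\phi(h)\le y^{1+o(1)}$. The repair is what the paper does: since $|g|\le\log x$ forces $h\ll y$, one has $\tau(h)=y^{o(1)}$ and $h/\phi(h)\ll\log\log y$, so $\psi(h)\ll y^{o(1)}$; moreover $u\le x^{1/y^2}$ gives $\log(x/u)\asymp\log x$, so Theorem~\ref{thm:uniform-order-on-grh} with $L=y^2$ bounds the number of $p\le x/u$ with $\ord{g}{p}\le p/y^2$ by $\ll \pi(x/u)\,y^{o(1)}/y^2 + xy/(u\log^2x)=o\bigl(x/(uy\log x)\bigr)$, which can then be subtracted from the count $\gg x/(uy\log x)$ furnished by Lemma~3 of \cite{carmichael-lambda}.
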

\begin{proof}
Note that for $\bj\in\bJ$, $\|\bj\|\le
\sum_{k=1}^{l} {E}_k/k \ll y/\log y$ by \eqref{eq:Ek} and \eqref{eq:Pk}.
For such vectors $\bj$,
Lemma 3 of \cite{carmichael-lambda} implies that the number of
primes $p$ with $\max(x/2u,x^{1/y})<p\le x/u$, $(p-1,D)=2$, and $(p-1,p_i-1)=2$
for all $p_i\mid u$ is $\gg x/(uy\log x)$.  Thus it suffices to
show that
$$
|\{p\le x/u:(p-1,D)=2,\,\ord{g}{p}\le p/y^2\}|=o(x/(uy\log x)).
$$
As we have seen, for
$\bj\in\bJ$, $\|\bj\|\ll y/\log y$, so that
$u\in \tilde{\Omega}_2(x;\bj)$ has $u\le x^{1/y^2}$ for all large $x$.
Thus, Theorem~\ref{thm:uniform-order-on-grh} implies that
$$
\sum_{\substack{p\le x/u\\ \ord{g}{p}\le p/y^2}}1
\ll
\frac{\pi(x/u)}{y^2}
\ll
\frac{x}{uy^2 \log x }
=
o\left( \frac{x}{u y \log x}\right).
$$
The result follows.
\end{proof}

\begin{lem}
\label{lem:four-prime}
If $\bj\in\bJ$, then for $x\ge x_3$,  
$$
\sum_{u \in \tilde{\Omega}_2(x; \bj)}
\frac1u
>
\exp \left(
\frac{-c_3 y \log \log y}{\log^2 y}
\right)
\prod_{k\le\log y}
\frac{{E}_k^{j_k}}{j_k!}
$$
where $x_3,c_3>0$ are absolute constants.
\end{lem}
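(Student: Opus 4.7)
The plan follows Lemma~4 of \cite{carmichael-lambda}: pass to an ordered-tuple version of the sum that factorizes over the blocks $\tilde S_k$, estimate the losses from the range restriction $p\le x^{1/y^3}$ and from enforcing distinctness, then handle the pairwise gcd condition defining $\tilde\Omega_2$ by a Brun-sieve argument. Define $R_k:=\sum_{p\in\tilde S_k,\,p\le x^{1/y^3}}1/p$; the prime-power part of $\tilde E_k$ is $O(1)$, so Lemma~\ref{lem:Ek-equals-Ek-tilde} together with a partial-summation version of the density estimate behind \eqref{eq:Ek} (uniform in $k\le\log y$) gives $R_k\ge E_k(1-3\log y/y)(1-o(1))$. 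Since $\sum_k j_k\le\sum_k E_k/k\ll y/\log y$ by Lemma~\ref{lem:sum-Ek-over-k}, one gets $\prod_k(R_k/E_k)^{j_k}\ge\exp(-O(1))$, an absolute positive constant.

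Because the $\tilde S_k$ are pairwise disjoint and each $u\in\tilde\Omega_1(x;\bj)$ corresponds to $\prod_k j_k!$ orderings respecting block memberships, inclusion-exclusion on tuples with repeated entries gives
$$
\sum_{u\in\tilde\Omega_1(x;\bj)}\frac{1}{u}\ge\Bigl(1-\sum_k\binom{j_k}{2}\frac{S_k}{R_k^2}\Bigr)\prod_k\frac{R_k^{j_k}}{j_k!}\gg\prod_k\frac{E_k^{j_k}}{j_k!},
$$
where $S_k:=\sum_{p\in\tilde S_k}1/p^2\ll 1/k$ (the smallest prime in $S_k$ exceeds $2k$) and the parenthesized factor is bounded below by an absolute constant using $j_k\le E_k/k$, $R_k\asymp E_k$ and $\sum 1/k^3=O(1)$. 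For the passage to $\tilde\Omega_2$, note that $(p_a-1,p_b-1)\nmid D=m!$ iff some prime $q>m$ divides both $p_a-1$ and $p_b-1$; equivalently, letting $N_q:=\#\{i:q\mid p_i-1\}$, the gcd condition is $N_q\le 1$ for all $q>m$. By Brun--Titchmarsh (using $q\nmid D$), the reciprocal-weighted probability $\Pr[q\mid p_i-1]\asymp 1/q$, approximately independent across $i$, so a sieve argument treating distinct $q$ as independent yields
$$
\sum_{u\in\tilde\Omega_2(x;\bj)}\frac{1}{u}\ge\exp\Bigl(\sum_{q>m}\bigl[\log(1+v/q)-v/q\bigr]\Bigr)\prod_k\frac{E_k^{j_k}}{j_k!}(1+o(1)).
$$
The exponent splits at $q=v$: for primes $m<q\le v$, one has $|\log(1+v/q)-v/q|\ll v/q$, and Mertens gives $\sum_{m<q\le v}v/q\asymp v(\log\log v-\log\log m)\asymp(y/\log y)(\log\log y/\log y)=y\log\log y/\log^2 y$; for $q>v$, $|\log(1+v/q)-v/q|\approx(v/q)^2/2$, summing to $O(v/\log v)=O(y/\log^2 y)$. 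Both are $O(y\log\log y/\log^2 y)$, yielding the claimed bound.

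\emph{Main obstacle.} The delicate step is the sieve for the gcd condition. A naive union bound gives $\Pr[\text{all }N_q\le 1]\ge 1-\binom{v}{2}\sum_{q>m}1/q^2\ge\exp(-O(y))$, much weaker than required. The improvement to $\exp(-O(y\log\log y/\log^2 y))$ exploits the clumping of divisibility events at each fixed $q$: the exact $\Pr[N_q\le 1]=(1-1/q)^{v-1}(1+(v-1)/q)\approx(1+v/q)e^{-v/q}$ is substantially larger than $1-\binom{v}{2}/q^2$, and its logarithm sums to only $O(y\log\log y/\log^2 y)$ via the Mertens cancellation $\log\log v-\log\log m\asymp\log\log y/\log y$. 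Making the independence across distinct primes $q$ rigorous requires a Brun-type sieve as in \cite[Lemma~4]{carmichael-lambda}, adapted to the present setting where $\tilde\Omega_1$ and $\tilde\Omega_2$ are defined via the $\ord{g}{n}$-based blocks $\tilde S_k$ in place of the $\lambda(n)$-based blocks used there.
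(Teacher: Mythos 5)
Your overall strategy coincides with the paper's: both arguments rest on Lemma~4 of \cite{carmichael-lambda}, with Lemma~\ref{lem:Ek-equals-Ek-tilde} absorbing the cost of replacing $S_k$ by $\tilde S_k$, and your quantitative bookkeeping is right --- the dominant loss $\exp(-O(y\log\log y/\log^2 y))$ does come from primes $q\in(m,v]$ with $v=\|\bj\|\ll y/\log y$ via Mertens, and the range restriction $p<x^{1/y^3}$ and the distinctness corrections are indeed negligible (though your claim that $\prod_k(R_k/E_k)^{j_k}\ge\exp(-O(1))$ is slightly optimistic: the crude Mertens bound gives a relative loss $O(\log^3y/y)$ per prime, hence $\exp(-O(\log^2 y))$ over the whole tuple, which is still harmless). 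Your reduction of the condition $(p_a-1,p_b-1)\nmid D$ to ``some prime $q>m$ divides both'' also needs the observation, made in the paper right after \eqref{eq:Skalt}, that $(p_i-1,D)=2k_i$ with $k_i\le\log y$ forces every prime factor of $(p_i-1)/2k_i$ to exceed $m$; it is true here, but you use it without justification.

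The genuine shortfall is at the central step, the passage from $\tilde\Omega_1$ to $\tilde\Omega_2$. Your display asserting
$\sum_{u\in\tilde\Omega_2}1/u\ge\exp\bigl(\sum_{q>m}[\log(1+v/q)-v/q]\bigr)\prod_k E_k^{j_k}/j_k!\,(1+o(1))$
is obtained by treating the divisibility events $q\mid p_i-1$ as independent across $i$ and across $q$; that is a heuristic, not a proof, and you concede as much by saying the rigorous version ``requires a Brun-type sieve as in \cite[Lemma~4]{carmichael-lambda}.'' In other words, the hard content of your argument is deferred to exactly the lemma being adapted. The paper closes this loop differently and more economically: it writes the sum as $(j_1!\cdots j_l!)^{-1}$ times a sum over ordered tuples of distinct primes with the block and pairwise-gcd conditions, notes that this is \emph{verbatim} the sum bounded below in Lemma~4 of \cite{carmichael-lambda}, and checks that the only place the new conditions ($p\nmid g$ and $(p-1)/2k\mid\ord{g}{p}$) enter that proof is the key prime-reciprocal sum on pp.~381--383, which by Lemma~\ref{lem:Ek-equals-Ek-tilde} changes only by $1+O(\log^5y/y)$, negligible against the factor $1+O(\log\log y/\log y)$ already present there. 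So rather than re-deriving the sieve with independence assumptions, you should argue that the sieve input is stable under $S_k\to\tilde S_k$ and quote the cited lemma's conclusion; as written, your proof has a gap at precisely the pairwise-gcd sieve, albeit one fillable from the source you name.
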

\begin{proof}
The sum in the lemma is equal to
$$
\frac{1}{j_1!j_2!\cdots j_{\lfloor\log y\rfloor}!}
\sum_{\langle p_1,p_2,\dots,p_v\rangle}\frac1{p_1p_2\cdots p_v}
$$
where the sum is over sequences of distinct primes where the first
$j_1$ are in $\tilde{S}_1$, the next $j_2$ are in $\tilde{S}_2$,
and so on, and also each $(p_i-1,p_j-1)\mid D$ for $i\ne j$.  Such
a sum is estimated from below in Lemma~4 of \cite{carmichael-lambda}
but without the extra conditions that differentiate $\tilde{S}_k$ from $S_k$.
The key prime reciprocal sum there is estimated on pages 381--383 to be
$$
E_k\left(1+O\left(\frac{\log\log y}{\log y}\right)\right).
$$
In our case we have the extra conditions that $p\nmid g$ and
$(p-1)/2k\mid \ord{g}{p}$, which alters the sum by a factor
of $1+O(\log^5y/y)$ by Lemma~\ref{lem:Ek-equals-Ek-tilde}.
But the factor $1+O(\log^5y/y)$ is negligible compared with
the factor $1+O(\log\log y/\log y)$, so we have exactly the
same expression in our current case.
The proof is complete.
\end{proof}

\subsection{Conclusion}
\label{sec:conclusion}

For brevity, let $l=\lfloor\log y\rfloor$.
We clearly have
$$
T_g(x)
\geq
\frac{1}{x}
\sum_{\bj \in \bJ}
\sum_{n \in \tilde{\Omega}_4(x; \bj)} 
\ord{g}{n}.
$$
By Lemma~\ref{lem:two-prime}, we thus have
$$
T_g(x)
\gg
\frac{1}{y^3} 
\sum_{\bj \in \bJ}
\prod_{k=1}^{l}
(2k)^{-j_k}
\sum_{n \in \tilde{\Omega}_4(x; \bj)} 
1.
$$
Now,
$$
\sum_{n \in \tilde{\Omega}_4(x; \bj)}  1 
=
\sum_{u \in \tilde{\Omega}_2(x; \bj)}  
\sum_{ up \in \tilde{\Omega}_4(x; \bj)} 1,
$$
and by Lemma~\ref{lem:three-prime}, this is 
$$
\gg
\sum_{u \in \tilde{\Omega}_2(x; \bj)}  
\frac{x}{uy \log x},
$$
which in turn by Lemma~\ref{lem:four-prime} 
is 
$$
\gg
\frac{x}{y \log x}
\exp \left(
\frac{-c_3 y \log \log y}{\log^2 y}
\right)
\prod_{k=1}^l
\frac{{E}_k^{j_k}}{j_k!}.
$$
Hence
$$
T_g(x)
\gg
\frac{x}{y^{4}\log x}
\exp \left(
\frac{-c_3 y \log \log y}{\log^2 y}
\right)
\sum_{\bj \in \bJ}
\prod_{k=1}^{l}
(2k)^{-j_k}
\frac{{E}_k^{j_k}}{j_k!}.
$$
Now,
$$
\sum_{\bj \in \bJ} \prod_{k=1}^{l} (2k)^{-j_k} \frac{{E}_k^{j_k}}{j_k!}
=
\prod_{k=1}^{l}
\left(
\sum_{j_{k}=0}^{[{E}_{k}/k]}
\frac{({E}_k/2k)^{j_k}}{ j_k!}
\right).
$$
Note that $\sum_{j=0}^{2w} w^{j}/j! > {\rm e}^{w}/2$ for $w \geq 1$
and also that ${E}_k/2k \geq 1$ for $x$ sufficiently large, as
${E}_k \gg y/(k \log y)$ by (\ref{eq:Ek-tilde-lower-bound}).  Thus,
$$
\sum_{\bj \in \bJ} \prod_{k=1}^{l} (2k)^{-j_k} \frac{{E}_k^{j_k}}{j_k!}
>
2^{-l} 
\exp \left( 
\sum_{k=1}^l \frac{{E}_k}{2k}
\right).
$$
Hence
$$
T_g(x)
\gg
\frac{x}{y^{4}\log x}
\exp \left(
\frac{-c_3 y \log \log y}{\log^2 y}
\right)
2^{-l} 
\exp \left( 
\sum_{k=1}^l \frac{{E}_k}{2k}
\right).
$$
By Lemma~\ref{lem:sum-Ek-over-k}
we thus have the lower bound in the theorem.
The proof is concluded.

\section{Averaging over prime moduli --- the proofs}

\subsection{Proof of Theorem~\ref{thm:prime-main}}

Let $z=(\log x/\log\log x)^{1/2}$, and abbreviate $\ord{g}{p},i_g(p)$
with $\ell(p),i(p)$, respectively.  We have
$$
\sum_{p\le x}\ell(p)=
\sum_{\substack{p\le x\\ i(p)\le z}}\ell(p)+
\sum_{\substack{p\le x\\ i(p)>z}}\ell(p)
=A+E,
$$
say.
Writing $\ell(p) = (p-1)/i(p)$ and using the identity 
$1/i(p) = \sum_{uv|i(p)} \mu(v)/u$, we find that
%
\begin{align*}
A&=\sum_{\substack{p\le x\\ i(p)\le z}}(p-1)\sum_{uv\mid i(p)}\frac{\mu(v)}{u}\\
&=\sum_{p\le x}(p-1)\sum_{\substack{uv\mid i(p)\\ uv\le z}}\frac{\mu(v)}{u}
-
\sum_{\substack{p\le x\\ i(p)>z}}(p-1)
\sum_{\substack{uv\mid i(p)\\ uv\le z}}\frac{\mu(v)}{u}\\
&= A_1-E_1,
\end{align*}
say.  The main term $A_1$ is
$$
A_1=\sum_{uv\le z}\frac{\mu(v)}{u}
\sum_{\substack{p\le x\\ uv\mid i(p)}}(p-1).
$$
By a simple partial summation using Theorem~\ref{thm:pnt}, the inner sum
here is
$$
\frac12x\frac{\pi(x)}{D_{g}(uv)}
+O\left(\frac{x^2}{\log^2 x}\right),
$$
assuming the GRH.
(By replacing $\frac12x\pi(x)$ with $\pi(x^2)$ or li$(x^2)$, the
error term here can be strengthened to $O(x^{3/2}\log x)$, but
we shall not need this precision.)
Thus,
\begin{align*}
A_1&=\frac12x\pi(x)\left(\sum_{uv\le z}\frac{\mu(v)}{u D_{g}(uv)} \right)
+ O\left(\frac{x^2}{\log^2 x}
\sum_{n\le z}\left|\sum_{uv=n}\frac{\mu(v)}{u}\right|\right).
\end{align*}
The inner sum in the $O$-term is $\phi(n)/n$, so the $O$-term is
$O(x^2z/\log^2x)$.
Recalling that $\rad(n)$ denotes the largest squarefree divisor of
$n$, we note 
that $\sum_{v\mid k}  \mu(v)v = \prod_{p|k} (1-p) =
(-1)^{\omega(k)}\phi(\rad(k))$, and hence 
$$
\sum_{u,v}\frac{\mu(v)}{u D_{g}(uv)}
=\sum_{k\geq 1}\sum_{v\mid k}\frac{\mu(v)v}{D_{g}(k)k}
=\sum_{k\geq 1}\frac{(-1)^{\omega(k)}\phi(\rad(k))}{D_{g}(k)k}
$$
which, on noting that $\phi(\rad(k)) = \phi(k) \rad(k)/k$, equals
$$
\sum_{k\geq 1}\frac{(-1)^{\omega(k)} \rad(k) \phi(k)}{D_{g}(k)k^{2}} = c_{g}
$$
Thus, with $\psi(h):=h\tau(h)/\phi(h)$,
$$
\sum_{uv\le z}\frac{\mu(v)}{uvD_{g}(uv)}=
c_{g}
-\sum_{k > z}\frac{(-1)^{\omega(k)} \rad(k) \phi(k)}{D_{g}(k)k^{2}}
=
c_{g} + O(\psi(h)/z),
$$
by the same argument as in the fourth step of the proof of
Theorem~\ref{thm:uniform-order-on-grh} (in particular, see
(\ref{eq:sum-estimate}).) 
It now follows 
that
$$
A_1= x  \pi(x) \cdot
\left(
\frac{c_{g}}{2}+O(\psi(h) /z)) + O(z/\log x)
\right).
$$

It remains to estimate the two error terms $E,E_1$.
Using Theorem~\ref{thm:uniform-order-on-grh}, we have
$$
E\ll\frac{x}{z}\cdot\frac{\pi(x)}{z}\psi(h)\ll \frac{x\pi(x)\psi(h)}{z^2}.
$$
To estimate $E_1$, we consider separately terms with $z<i(p)\le z^2$
and terms with $i(p)>z^2$, denoting the two sums $E_{1,1},E_{1,2}$,
respectively.  Note that
$$
\left|\sum_{\substack{uv\mid n\\ uv\le z}}\frac{\mu(v)}{u}\right|
\le\sum_{u\mid n}\frac1u\sum_{\substack{v\mid n\\ v\le z}}1\le
\frac{\tau(n)\sigma(n)}{n},
$$
%
where $\sigma(n) = \sum_{d|n} d$.
We use this estimate for $E_{1,1}$, getting
$$
|E_{1,1}|\le \sum_{z<n\le z^2}\frac{\tau(n)\sigma(n)}{n}
\sum_{\substack{p\le x\\ n\mid i(p)}}(p-1)
\ll x\pi(x)\psi(h)\sum_{z<n\le z^2}\frac{\tau(n)\sigma(n)}{n D_{g}(n)},
$$
using Theorem~\ref{thm:uniform-order-on-grh}.  
Since $D_{g}(n)\geq \frac{\phi(n)\cdot n}{2(h,n)}$ by
Proposition~\ref{prop:Wagstaff}, an elementary 
calculation then shows that  
$$
|E_{1,1}|\ll\frac{x\pi(x) \psi(h) \log z }{z}.
$$
For $E_{1,2}$ we use 
$$
\left|\sum_{\substack{uv\mid n\\uv\le z}}\frac{\mu(v)}{u}\right|
\le \sum_{u\le z}\frac1u\sum_{v\le z/u}1\le z\sum_{u\le z}\frac1{u^2}
\ll z.
$$
Thus, using Theorem~\ref{thm:uniform-order-on-grh},
$$
|E_{1,2}|\le xz\sum_{\substack{p\le x\\ i(p)>z^2}}1\ll\frac{x\pi(x)\psi(h)}{z}.
$$

We conclude that
\begin{align*}
\sum_{p\le x}&l(p)=A+E=A_1-E_{1,1}-E_{1,2}+E\\
&=\frac{c_{g}}{2}x\pi(x)+
O\left(\psi(h)\left(\frac{x\pi(x)}{z} + \frac{x \pi(x) z}{\log x} +
\frac{x\pi(x)}{z^2}+\frac{x\pi(x)\log z}{z} \right)\right)\\
&=\frac{c_{g}}{2}x\pi(x)+
O\left(\frac{x^2(\log\log x)^{3/2}\psi(h)}{(\log x)^{3/2}}\right)\\
&=\frac{c_{g}}{2}x\pi(x)+
O\left(\frac{x^2}{(\log x)^{3/2-1/\log\log\log x}}\right),
\end{align*}
using that that $(\log \log x)^{3/2}
\psi(h) \ll (\log x)^{1/\log\log\log x}$ since $h\ll \log x$.  This
completes the proof.

\subsection{Proof of Proposition~\ref{prop:finding-cg}}

\begin{proof}[Proof of Proposition~\ref{prop:finding-cg}]
We begin with the cases $g>0$, or $g<0$ and $e=0$.
Recalling that $D_{g}(k) = \phi(k) k / (\epsilon_g(k)
(k,h))$, we find that
\begin{equation}
\label{eq:finding-c-g}
c_{g}=
\sum_{k\geq 1}\frac{(-1)^{\omega(k)} \rad(k) \phi(k)}{D_{g}(k)k^{2}}
=
\sum_{k\geq 1}\frac{(-1)^{\omega(k)} \rad(k) (k,h) \epsilon_g(k)
  }{k^{3}}.
\end{equation}
Now, since $\epsilon_g(k)$ equals $1$ if $n \nmid k$, and $2$
otherwise,  (\ref{eq:finding-c-g}) equals
\begin{equation}
\label{eq:finding-c-g-two}
\sum_{k\geq 1}\frac{(-1)^{\omega(k)} \rad(k)(h,k)}{k^{3}} +
\sum_{n|k}\frac{(-1)^{\omega(k)} \rad(k) (h,k)}{k^{3}}
=
\sum_{k\geq 1} ( f(k) + f(kn) )
\end{equation}
where the function $f(k) = (-1)^{\omega(k)} \rad(k)(h,k)/k^{3}$ is
multiplicative.

If $p \nmid h$ and $j \geq 1$, we have
$$
f(p^{j}) = -p/p^{3j}.
$$
On the other hand, writing $h = \prod_{p|h} p^{e_{h,p}}$ we have
$$
f(p^{j}) = -p^{1+\min(j,e_{h,p})}/p^{3j}
$$
for $p|h$ and $j \geq 1$.
Since $f$ is multiplicative, 
$$
\sum_{k\geq 1} ( f(k) + f(kn) )
=
\sum_{k~:~\rad(k) | hn} ( f(k) + f(kn) ) 
\cdot \sum_{(k,hn)=1} f(k).
$$
Now, for $p \nmid h$ and $j \geq 1$, we have $f(p^{j}) =
- \rad(p^j)/p^{3j} = -p/p^{3j}$, hence $\sum_{j \geq 0} f(p^{j}) = 1 -
\frac{p}{p^{3}(1-1/p^{3})} = 1-\frac{p}{p^{3}-1}$ and thus
$$
\sum_{(k,hn)=1} f(k) 
=
\prod_{p \nmid hn} F(p)
=
\prod_{p \nmid hn} (1-\frac{p}{p^{3}-1})
=
\frac{c}{\prod_{p | hn} (1-\frac{p}{p^{3}-1})}.
$$
Similarly,
$
\sum_{\rad(k)|hn} f(k) 
=
\prod_{p|hn}
F(p)
$
and 
$$
\sum_{\rad(k)|hn} f(kn) 
=
\prod_{p|hn} \left( \sum_{j \ge e_{n,p}} f(p^{j}) \right)
=
\prod_{p|hn} \left( F(p) - F(p,e_{n,p}) \right).
$$
Hence
\begin{align*}
\sum_{\rad(k)|hn} f(k) &+ \sum_{\rad(k)|hn} f(kn) 
=
\prod_{p|hn}
F(p)
+
\prod_{p|hn} \left( F(p) - F(p,e_{n,p}) \right)\\
&=
\prod_{p|hn}
F(p)\cdot 
\left( 1 + 
  \prod_{p|hn} \left(1 - \frac{F(p,e_{n,p})}{F(p)} \right)
\right).
\end{align*}
Thus
$$
c_{g} = 
\frac{c}{\prod_{p | hn} (1-\frac{p}{p^{3}-1})}
\cdot
\prod_{p|hn}
F(p) \cdot
\left( 1 + 
  \prod_{p|hn} \left(1 - \frac{F(p,e_{n,p})}{F(p)} \right)
\right),
$$
which, by (\ref{eq:Fp-if-no-h}), simplifies to
$$
c_g=
c \cdot
\prod_{p|h}\frac{F(p)}{1-\frac{p}{p^{3}-1}}
\cdot
\left( 1 + 
  \prod_{p|hn} \left(1 - \frac{F(p,e_{n,p})}{F(p)} \right)
\right).
$$

The case $g < 0$ and $e>0$ is similar: using the multiplicativity of
$f$ together with the definition of $\epsilon_{g}(k)$, we find that
\begin{align*}
c_{g}&= 
\sum_{k\geq 1} ( f(k) + f(kn)   )
-\frac{1}{2}
\sum_{j=1}^{e}  \sum_{(k,2)=1} f(2^{j} k)\\
&=
\prod_p F(p) +
\prod_p (F(p) - F(p,e_{n,p})) -
\frac{1}{2} \cdot  (F(2,e+1)-1 ) \cdot \prod_{p>2} F(p) \\
&=
\prod_p F(p) \left(
1+ \prod_{p|n} \left( 1- \frac{ F(p,e_{n,p})}{F(p)} \right)
-
\frac{F(2,e+1)-1}{2F(2)}
\right).
\end{align*}
Again using the fact that 
$$
\prod_p F(p) = \prod_{p \nmid h}
(1-\frac{p}{p^{3}+1}) \prod_{p|h}F(p) = c \cdot \prod_{p|h}
\frac{F(p)}{1-p/(p^3+1)}
$$ 
the proof is concluded.

\end{proof}

\section{Acknowledgments}

Part of this work was done while the authors visited MSRI, as part of
the semester program ``Arithmetic Statistics''.  We thank MSRI for their
support, funded through the NSF.
We are very grateful to Michel Balazard for suggesting Arnold's conjecture
to us.  In addition we thank Pieter Moree for some helpful comments.




\end{document}